\numberwithin{paragraph}{section}
\setlist[enumerate]{label=\it{(\roman*)},
	ref=\it{(\roman*)}}
\newcommand{\R}{{\mathbb R}}
\newcommand{\Q}{{\mathbb Q}}
\newcommand{\Z}{{\mathbb Z}}
\newcommand{\N}{{\mathbb N}}
\newcommand{\NN}{\mathcal{N}}
\newtheorem{theorem}{Theorem}[section]
\newtheorem{lemma}[theorem]{Lemma}
\newtheorem{lemma*}{Lemma}
\theoremstyle{definition}
\newtheorem{proposition&definition}[theorem]{Proposition\&Definition}
\newtheorem{lemma&definition}[theorem]{Lemma\&Definition}
\newtheorem{theorem&definition}[theorem]{Theorem\&Definition}
\newtheorem{example*}{Example}
\newtheorem{remark}{Remark}
\newtheorem{question*}{Question}
\def\I{\mathcal I}
\def\I{{\mathcal I}}
\def\T{{\mathcal T}}
 \def\NN{ N}
\def\ds{\displaystyle}
\numberwithin{equation}{section}
\begin{document}
	
	\title[ On Some Multipliers Related to Discrete Fractional Integrals]{ On Some Multipliers Related to Discrete Fractional Integrals }
	
	\author[J.~Cheng]{Jinhua Cheng}
	\address{J. Cheng,  School of  Mathematical Sciences, Zhejiang University, 310058 Hangzhou, P. R. China }
	\email{chengjinhua@zju.edu.cn}
	

	\begin{abstract}
This paper explores the properties of multipliers associated with discrete analogues of fractional integrals, revealing intriguing connections with Dirichlet characters, Euler's identity, and Dedekind zeta functions of quadratic imaginary fields. Employing Fourier transform techniques, the Hardy--Littlewood circle method, and a discrete analogue of the Stein--Weiss inequality on product space through implication methods, we establish $\ell^p\rightarrow\ell^q$ bounds for these operators. Our results contribute to a deeper understanding of the intricate relationship between number theory and harmonic analysis in discrete domains, offering insights into the convergence behavior of these operators.
	\end{abstract}
	
	\keywords{discrete analogues; multipliers; fractional integrals; Hardy--Littlewood method} 
	\subjclass{{Primary 42A45;  Secondary 11M06, 11M41}}
	
	\maketitle
	
	\setcounter{tocdepth}{1}
	

\section{Introduction}
\setcounter{equation}{0}
The study of discrete analogues in harmonic analysis indeed shares a companionable relationship with the early history of singular integrals. Singular integrals, which arise from the convolution of functions with singular or highly oscillatory kernels, have been a central focus of harmonic analysis since its inception.  For example, in 1928, M. Riesz
 \cite{R28}  proved  the Hilbert transform, 
\[
Hf(x) = \int_{\R}\frac{f(x-y)}{y} dy, 
\]
is bounded on $L^p(\R)$ for all $1<p< \infty$,  and this implies its discrete analogue,
\[
\mathcal{H} f(n) = \sum_{m\in \Z \atop m \neq 0} \frac{f(n-m)}{m},
\]
is bounded on $\ell^p(\Z)$  for all $1<p< \infty$.  Here, $\ell^p(\Z)$ is defined as
\[
 \ell^p(\Z) =\left \{f ~ \text{defined on}~  \Z |  \sum_{m \in \Z} |f(m)|^p  < \infty \right \}.
\]
Moreover,
  $||f ||_{\ell^p(\Z) }  = \left( \sum_{m \in \Z } | f(m) |^p \right)^{\frac{1}{p}} $.

Another classical family of operators in harmonic analysis are the fractional integral operators, 
\[
     I_{s}f(x)=\int_{\R} \frac{f(x-y)}{|y|^{s}}dy, \quad 0 < s  < 1.
\]
It  is well known that  for  $1< p<q < \infty $ with $1/ q = 1/p - (1 - s )  $, $I_s$ is a bounded operator from $L^p(\R)$  to
$L^q(\R)$. The discrete analogue of this operator is defined by 
\[
\mathcal{I}_s f(n) = \sum_{m\in \Z \atop m \neq 0} \frac{f(n-m)}{|m|^s}.
\] 
Similarly,  the boundedness of $I_s$ implies the boundedness of  $\I_s$.
Consider a function $f$ defined on $\mathbb{Z}$, where $0<s<1$ and $k\geq 1$ is an integer. The discrete fractional operator $\I_{s,k}$ is defined as follows:
\begin{equation}
    \I_{s,k} f(m)=\sum_{n=1}^{\infty} \frac{f(m-n^k)}{n^s} ,\quad m\in\Z
\end{equation}
acting on functions defined on $\Z$. Stein and Wainger \cite{Stein-Wainger1} initiated the study of the $~\ell^p\rightarrow \ell^q~$ boundedness of $\I_{s,k}$, that is, there exists some constant $C$ such that
\[
|| \I_{s, k} f ||_{\ell^q}  \le C ||f||_{\ell^p}.
\]
On  the other hand, for $f$ defined on $\Z$,  its Fourier transform is defined by $ \hat{f}(x) = \sum_{n \in \Z} f(n) e^{- 2\pi i nx } $. Therefore,

\[
\widehat{\I_{s,k} f}(x) =m_{s,k}(x)\Hat{f}(x), \qquad m_{s,k}(x)=\sum_{n=1}^\infty \frac{e^{-2\pi i n^k x}}{n^s}.
\]
Here,  $m_{s, k}(x)$ is called the Fourier multiplier.

They demonstrated that when $1/2<s<1$, $m_{s,2}$ belongs to weak-type $\mathrm{L}^{2/(1-s)}[0,1]$ and $m_{s,k}$ belongs to weak-type $\mathrm{L}^{k/(1-s)}[0,1]$ as long as $s$ is sufficiently close to $1$.  The main tool is the Hardy--Littlewood circle method; for a more detailed introduction on circle methods, see \cite{W21}.
Furthermore, if this holds for all $1/2<s<1$, it would imply the ``Hypothesis $K^*$'' of Hardy, Littlewood, and Hooley, which remains an open problem in number theory.

Lillian Pierce's thesis \cite{Pierce Lillian 3} extended this result to positive definite quadratic forms. For instance, let $Q(x)=\frac{1}{2}x^tAx$ be a positive definite quadratic form, where $A$ is a real, positive definite, $2\times 2$ symmetric matrix with integer entries and even diagonal entries. Then, the corresponding multiplier
\[
m_{s,Q}(x)=\sum_{m\in \Z^2 \atop m\neq 0} \frac{e^{-2\pi i Q(m)x}}{Q(m)^s}
\]
is of weak-type $\mathrm{L}^{1/(1-s)}[0,1]$. In this paper, we mainly consider the 'twisted' multiplier. Let ${a_n}$ be a complex series, and define the corresponding multiplier by
\begin{equation}\label{twisted multiplier1}
    m_{s,\{a_n\}}(x)=\sum_{n=1}^\infty \frac{a_ne^{-2\pi i nx}}{n^s}.
\end{equation}
It is worth noting that if we set $a_n=1$ for $n=m^k\geq 1$ and $a_n=0$ otherwise, then $m_{s,\{a_n\}}(x)=m_{sk,k}(x)$. The series ${a_n}$ can originate from various areas related to number theory.

For instance, in Section \ref{sec2}, we delve into a \textbf{primitive} Dirichlet character $\chi$ modulo $N$, defining the corresponding twisted multiplier as

\begin{equation}\label{twisted multiplier}
m_{s,\chi}(x)=\sum_{n=1}^{\infty} \frac{\chi(n)e^{-2\pi \mathrm{i} n^2x}}{n^s} ,
\end{equation}
which differs from $m_{s,2}(x)$ in several respects. For example, $m_{s,\chi}(0)$ corresponds to the Dirichlet $L$-function and is bounded when $0<s<1$, whereas $m_{s,2}(x)$ tends to infinity as $x$ approaches $0$. However, we will demonstrate the following Theorem.

\begin{theorem}
    For  $1/2<s<1$, let $\chi$ be a primitive Dirichlet character modulo $N$; then, $~m_{s,\chi}~$ belongs to weak-type $L^{2/(1-s)}[0,1] $.
\end{theorem}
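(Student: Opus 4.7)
The plan is to adapt the Hardy--Littlewood circle method argument of Stein--Wainger to the character-twisted quadratic phase. First, I would introduce a Farey-type dissection of $[0,1]$ into major arcs around rationals $a/q$ with $\gcd(a,q)=1$ and denominators up to a cutoff $Q$ chosen in terms of the level $\lambda$ of the weak-type inequality. On a major arc centered at $a/q$, write $x = a/q + \beta$ with $|\beta|$ small, and group the summation index $n$ modulo $qN$, since both $\chi$ (period $N$) and the phase $e^{-2\pi i n^2 a/q}$ (period $q$ in $n$) share the common period $qN$.

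Writing $n = r + mqN$ with $1 \le r \le qN$, and applying partial summation (or Poisson summation) in $m$ against the smooth factor $n^{-s}e^{-2\pi i n^2 \beta}$, one should arrive at the approximation
\begin{equation*}
m_{s,\chi}\!\left(\tfrac{a}{q}+\beta\right) \;\approx\; \frac{G(\chi;a,q)}{qN}\,\Phi_s(\beta),
\end{equation*}
where $G(\chi;a,q) = \sum_{r=1}^{qN} \chi(r)\,e^{-2\pi i r^2 a/q}$ is a character-twisted Gauss sum and $\Phi_s(\beta) = \int_0^\infty t^{-s} e^{-2\pi i t^2 \beta}\,dt$ is an oscillatory integral satisfying $|\Phi_s(\beta)| \lesssim |\beta|^{(s-1)/2}$ by a standard scaling/stationary-phase argument. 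The key arithmetic input is the square-root cancellation $|G(\chi;a,q)| \lesssim (qN)^{1/2}$; this is where \emph{primitivity} of $\chi$ is essential, as it permits a Chinese Remainder factorization into a classical quadratic Gauss sum in the component coprime to $N$ and a character Gauss sum in the remaining component, each individually bounded by the square root of its own modulus. Combined, these give $|m_{s,\chi}(a/q+\beta)| \lesssim (qN)^{-1/2}|\beta|^{(s-1)/2}$ on each major arc.

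The weak-type estimate then follows by a level-set count: the condition $|m_{s,\chi}(x)| > \lambda$ on the major arc around $a/q$ forces $|\beta| \lesssim (\lambda^{2} qN)^{-1/(1-s)}$, so summing the resulting contribution over the $\phi(q) \le q$ fractions with denominator $q$, and then over $q$, yields total measure bounded by
\begin{equation*}
\lambda^{-2/(1-s)} N^{-1/(1-s)} \sum_{q \ge 1} q^{\,1-1/(1-s)},
\end{equation*}
a series that converges \emph{precisely} when $s > 1/2$, giving the claimed $\lambda^{-2/(1-s)}$ bound.

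The main obstacle I anticipate is establishing the uniform Gauss sum estimate in the ramified case where $\gcd(q,N) > 1$ and the quadratic phase interacts nontrivially with the character; here the primitivity hypothesis must be exploited carefully to preserve square-root cancellation, together with a clean bookkeeping of the CRT decomposition. A secondary technical point is to control the error in the major-arc approximation and the contribution of the minor arcs, both of which should be negligible for $Q$ chosen on the order of $\lambda^{-2/(1-s)}$ as in Stein--Wainger.
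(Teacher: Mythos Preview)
Your proposal is correct in outline and shares the circle-method skeleton with the paper, but the handling of the character is genuinely different. The paper does \emph{not} work with the twisted Gauss sum $G(\chi;a,q)=\sum_{r\bmod qN}\chi(r)e^{-2\pi i r^2 a/q}$ at all. Instead it first \emph{linearizes} the character via the finite Fourier expansion
\[
\chi(n)=\frac{\chi(-1)\tau(\chi)}{N}\sum_{k\bmod N}\overline{\chi(k)}\,e^{2\pi i kn/N},
\]
inserts this into a theta function $S_y(x)=\sum_n\chi(n)e^{-\pi n^2(y+2ix)}$, and represents $m_{s,\chi}$ as a Mellin-type integral $\int_0^1 S_y(x)\,y^{s/2-1}\,dy$. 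Poisson summation on $S_y$ then produces only the \emph{standard} quadratic Gauss sums $S(p/q,m/q)$, with the character's effect appearing merely as a shift of the dual variable by $k/N$. The main term survives only when $m=q/N$ is an integer, i.e.\ when $N\mid q$; this is the paper's analogue of your observation that $G(\chi;a,q)$ should vanish (by orthogonality of $\chi$) when $(q,N)=1$. The paper also organizes the Farey dissection dyadically in $y\sim 2^{-j}$ with cutoff $q\le 2^{j/2}$, rather than choosing a single $Q$ depending on the level $\lambda$, and assembles the weak-type bound via the Stein--Wainger disjoint-interval lemma rather than a direct level-set count.

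What each route buys: the paper's linearization completely sidesteps the ramified case $(q,N)>1$ that you correctly flag as the main obstacle, at the cost of introducing the theta/Mellin machinery. Your approach is more direct and avoids the integral representation, but you would indeed have to prove square-root cancellation for $G(\chi;a,q)$ uniformly in the ramified regime; this is doable (and in fact the sum vanishes outside $N\mid q$, matching the paper), but the bookkeeping is exactly what the paper's trick is designed to avoid.
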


In Section \ref{sec3}, we investigate the scenario where ${a_n}$ originates from Euler's identity, given by

\[
\prod_{n=1}^\infty (1-x^n)=\sum_{n=0}^\infty a_n x^n.
\]
We establish that $m_{s, \{a_n\}}$ belongs to weak-type $\mathrm{L}^{2/(1-2s)}[0,1]$ and provide an improved result regarding the regularity of the corresponding discrete fractional integral operator.

In Section \ref{sec4}, we delve into imaginary quadratic fields and the associated Dedekind zeta function. We demonstrate the close connection between the corresponding multipliers and positive definite quadratic forms as investigated by Lillian Pierce.

In the final section, we tackle the discrete analogue of the Stein--Weiss inequality on product space. Employing the "implication" method, we deduce the regularity property of the discrete fractional operator. 

Discrete analogues in harmonic analysis have garnered significant attention in recent decades, with notable contributions from scholars such as Stein and Wainger~\cite{Stein-Wainger1,Stein-Wainger2}, \mbox{Oberlin \cite{Oberlin}} and  Lillian Pierce \cite{Pierce Lillian 1,Pierce Lillian 2,Pierce Lillian 3} (see also \cite{B23, B19, M19, M24}). This paper introduces a novel perspective to the study of multipliers in harmonic analysis by incorporating primitive Dirichlet characters. This addition not only enriches the theoretical framework but also presents new challenges and complexities to be explored. As examples of the application of this approach, this paper investigates multipliers associated with Euler's identity and quadratic imaginary fields.

\begin{remark}
Let $r>0$ be a real number; we define a function $f$ as belonging to weak-type $ L^r[0,1]$ if
\[
|\{x\in [0,1]: f(x)>\alpha\}|\le c \alpha^{-r},\qquad\textit{for} \qquad  \alpha>0
\]
where $c$ is a constant independent of $\alpha$ and $f$.
\end{remark}

\section{Multipliers Twisted with Dirichlet Characters}\label{sec2}
Fix an integer $N>1$.
A {\bf Dirichlet character modulo
} $~N~$ is a function $\chi: ~\Z\rightarrow \mathbb{S}^1\cup \{0\}$ such that the following is true:
\begin{itemize}
    \item [(i)] $\chi(n)=0~$ if and only if $~(n,N)>1$. 
    \item [(ii)]  $\chi(n)=\chi(m)~$ if $~n\equiv m~$ mod$~N$.
     \item [(iii)]  $\chi(mn)=\chi(m)\chi(n)~$ for $m,n\in\Z$. 
     \end{itemize}
For simplicity, we  consider a {\bf primitive} Dirichlet character $\chi$ modulo $N.~$  Let $~\bar{\chi}(n)=\overline{\chi(n)}~$ and  the \emph{Gauss sum} $~\tau(\chi)~$ be defined by the formula
\[
\tau(\chi)=\sum_{n ~\textit{mod}~ N} \chi(n)e^{2\pi i \frac{n}{N}}.
\]
It is well known that 

\begin{equation}\label{character}
    \chi(n)=\frac{\chi(-1)\tau(\chi)}{N}\sum_{m ~\textit{mod}~ N} \overline{\chi(m)}e^{2\pi i \frac{mn}{N}} \qquad \textit{and}\qquad |\tau(\chi)|=N^{\frac{1}{2}}.
\end{equation}
Note  that the right-hand side of  (\ref{character}) is defined when $n$ is an arbitrary real number.

Let $m_{s,\chi}(x)$ be defined in (\ref{twisted multiplier}), since   the function $m_{s,\chi}(x)$ is in $L^2[0,1]$. Thus, the series is Abel--Gauss summable almost everywhere; 
hence,
\[
m_{s,\chi}(x)=\lim _{\epsilon\rightarrow 0} \sum_{n=1}^\infty \chi(n)n^{-s} e^{-2\pi i n^2x}e^{-\pi n^2\epsilon}.
\]
Next, we consider the $\Theta-$function

\begin{equation}\label{Theta func }
    S_y(x)=\sum_{n=-\infty}^\infty \chi(n)e^{-\pi n^2(y+2 i x)}, 
\end{equation}
Note that

\begin{equation}\label{multiplier trans}
   m_{s,\chi}(x)=C_{\alpha}\int_0^1 S_y(x)y^{-1+\frac{s}{2}} dy+O(1).   
\end{equation}
Now, in the definition of $S_y(x)$, we can replace the $~n$, which ranges over $\Z$, with $~n=mq+\ell~$, where $~m~$ ranges over $~\Z~$ and $~\ell~$ ranges over $~1\le \ell \le q~$. Then, $~S_y(x)~$ equals

\begin{equation}\label{S_y 1}
   \sum_{\ell=1}^qe^{2\pi i \ell^2\frac{p}{q}}\left\{\sum_{m\in\Z} \chi(mq+\ell) e^{- \pi (mq+\ell)^2y}\right\} .
\end{equation}
For  the inner sum, we use the Poisson summation formula $~\sum_{m\in \Z} f(m)=\sum_{m \in \Z} \Hat{f}(m)$, ~~with $~f(s)=\chi(sq+\ell)e^{-\pi (sq+\ell)^2y}$. Then,

\[
\begin{array}{lc}\ds
   \Hat{f}(\xi)=\int_{-\infty}^\infty \chi(sq+\ell)e^{-\pi(sq+\ell)^2}e^{-2\pi i s\cdot \xi} ds
   \\\\ \ds 
 =\frac{\chi(-1)\tau(\chi)}{qN}\sum_{k ~\textit{mod}~ N} \overline{\chi(k)}  \int_{-\infty}^\infty \chi(u)e^{-\pi u^2}e^{-2\pi i \left(\frac{u-\ell}{q}\right)\cdot \xi}e^{2\pi i \frac{ku}{N}}du
   \\\\ \ds 
    =\frac{\chi(-1)\tau(\chi)}{qy^{\frac{1}{2}}N}\sum_{k ~\textit{mod}~ N} \overline{\chi(k)}  e^{2\pi i \frac{\ell \xi}{q}} e^{-\frac{\pi }{y}\left(\frac{\xi}{q}-\frac{k}{N}\right)^2}.
   \\\\ \ds 
   \end{array}
\]
Therefore, we have
\begin{equation}\label{S_y 2}
   S_y(x)=\frac{\chi(-1)\tau(\chi)}{q(y+2 i\delta)^{\frac{1}{2}}N}\sum_{k ~\textit{mod}~ N} \overline{\chi(k)} 
\sum_{m=-\infty}^{\infty} S\left(\frac{p}{q},\frac{m}{q}\right)
 e^{-\frac{\pi}{y+2 i\delta}\left(\frac{m}{q}-\frac{k}{N}\right)^2}, 
\end{equation}
where
\[
S\left(\frac{p}{q},\frac{m}{q}\right)=\sum_{\ell=1}^qe^{2\pi  i \left(\frac{p}{q}\ell^2+\frac{m}{q}\ell\right)},\qquad x=\frac{p}{q}+\delta.
\]
It suffices to consider $T_y(x)$ defined as
\begin{equation}\label{T_y}
    T_y(x)=\frac{1}{q(y+2 i\delta)^{\frac{1}{2}}}
\sum_{m=-\infty}^{\infty} S\left(\frac{p}{q},\frac{m}{q}\right)
 e^{-\frac{\pi}{y+2 i\delta}\left(\frac{m}{q}-\frac{1}{N}\right)^2}.
\end{equation}
Now, write 
\begin{equation}\label{decomp}
   \int_0^1 T_y(x) y^{-1+\frac{s}{2}}dy=\sum_{j=0}^\infty \int_{2^{-j-1}}^{2^{-j}}T_y(x) y^{-1+\frac{s}{2}}dy. 
\end{equation}
and estimate $T_y(x)$ when $y$ is of a fixed order of magnitude and $x$ is ``sufficiently close''  to an appropriate rational $~\frac{p}{q}~$, with $~(p,q)=1,~0<\frac{p}{q}\le1$. Actually, we have the following Lemma.
\begin{lemma}
If $x=\frac{p}{q}+\delta~$, with $~q\lesssim y^{-\frac{1}{2}}~$ and $~q|\delta|\lesssim y^{\frac{1}{2}}$,~then 
\begin{equation}\label{T_y estimate}
 \begin{array}{cc}
    T_y(x) =
 S\left(\frac{p}{q},\frac{1}{N}\right)\frac{1}{q(y+2 i\delta)^{\frac{1}{2}}}+O(y^{-\frac{1}{4}}),\qquad \textit{if} \qquad N|q,
 \\\\ \ds
~~~ T_y(x)=O( y^{-\frac{1}{4}}) \qquad~~~~~~~~~~~~~~~~~~~~~~~~~~~~\textit{if} \qquad N\nmid q.
 \end{array}
\end{equation}
\end{lemma}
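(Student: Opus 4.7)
The plan is to reduce the estimate of $T_y(x)$ to at most one dominant term, namely the summand in (\ref{T_y}) whose Gaussian is centered precisely at a lattice point $m/q$. Such a center exists if and only if $N\mid q$, in which case it is $m_0=q/N$; the remaining summands will form an $O(y^{-1/4})$ error under the stated hypotheses, yielding both assertions of the lemma at once.

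First, I would control the complex weight. The two hypotheses together force $|\delta|\lesssim y$, whence $|y+2 i\delta|\asymp y$ and
\[
\Re\!\left(\frac{\pi}{y+2 i\delta}\right) \;=\; \frac{\pi y}{y^{2}+4\delta^{2}} \;\asymp\; \frac{1}{y}.
\]
Combining this with the classical uniform Gauss sum estimate $|S(p/q,m/q)|\leq C\sqrt{q}$ (valid since $(p,q)=1$), each summand of (\ref{T_y}) is dominated in modulus by $C\, y^{-1/2} q^{-1/2}\exp\!\bigl(-c(m/q-1/N)^{2}/y\bigr)$.

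Then I would split by whether $N\mid q$. In the case $N\mid q$, the index $m_0=q/N$ contributes the stated main term exactly, and the remaining indices $m=m_0+k$, $k\neq 0$, produce
\[
\frac{C}{q^{1/2} y^{1/2}}\sum_{k\neq 0} e^{-c k^{2}/(q^{2}y)}.
\]
In the case $N\nmid q$, the closest lattice point $m^{*}$ has $|m^{*}/q-1/N|\geq 1/(Nq)$, and the entire $T_y$ is bounded by an analogous sum of shifted Gaussians with shift $\theta\in[1/N,1-1/N]$. Setting $t:=q^{2}y\in(0,C_{0}]$, the Gaussian tails satisfy $\sum_{k\neq 0}e^{-ck^{2}/t}\lesssim \sqrt{t}+e^{-c/t}$ (via Poisson summation for $t\geq 1$ and direct geometric comparison for $t\leq 1$), so
\[
C\, y^{-1/2}q^{-1/2}\bigl(\sqrt{t}+e^{-c/t}\bigr) \;=\; C\, y^{-1/4}\bigl(t^{1/4}+t^{-1/4}e^{-c/t}\bigr) \;\lesssim\; y^{-1/4},
\]
since both $t^{1/4}$ and $t^{-1/4}e^{-c/t}$ are bounded on $(0,C_{0}]$.

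The main obstacle is the very first step: verifying that the complex phase in $y+2 i\delta$ does not spoil the Gaussian decay. This is precisely what the constraints $q\lesssim y^{-1/2}$ and $q|\delta|\lesssim y^{1/2}$ are calibrated to guarantee, by forcing the real part of $\pi/(y+2 i\delta)$ to remain of order $1/y$; all subsequent estimates are then routine bookkeeping of the Gauss sum bound together with the Gaussian tail.
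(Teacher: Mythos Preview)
Your overall strategy—isolate the term with $m_0=q/N$ when $N\mid q$, bound the rest by a Gaussian tail—matches the paper's. But the very first step contains a genuine error that invalidates the subsequent pointwise bound.

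The hypotheses $q\lesssim y^{-1/2}$ and $q|\delta|\lesssim y^{1/2}$ do \emph{not} force $|\delta|\lesssim y$; they only give $|\delta|\lesssim y^{1/2}/q\le y^{1/2}$, and for small $q$ one genuinely has $|\delta|$ of size $y^{1/2}\gg y$. (Take $y=10^{-4}$, $q=1$, $\delta=10^{-2}$.) Consequently $|y+2i\delta|\asymp y$ and $\Re\bigl(\pi/(y+2i\delta)\bigr)\asymp 1/y$ are both false in general: the real part $\pi y/(y^{2}+4\delta^{2})$ can be as small as a constant, not $1/y$. Your claimed domination
\[
\Bigl|\text{summand}\Bigr|\le C\,y^{-1/2}q^{-1/2}\exp\!\bigl(-c(m/q-1/N)^{2}/y\bigr)
\]
therefore fails as an upper bound: the true exponential factor is $\exp\!\bigl(-\tfrac{\pi y}{y^{2}+4\delta^{2}}(m/q-1/N)^{2}\bigr)$, which in the example above is $e^{-O(1)}$ rather than the essentially zero value your bound predicts.

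The repair is exactly what the paper does: do not simplify. Keep the prefactor $(y^{2}+4\delta^{2})^{-1/4}$ and the exponent $u:=\dfrac{\pi y}{q^{2}(y^{2}+4\delta^{2})}$. What the hypotheses \emph{do} yield is $u\gtrsim 1$ (since $q^{2}y^{2}\le (q y^{1/2})^{2}\cdot y\lesssim y$ and $q^{2}\delta^{2}\lesssim y$). Then $\sum_{k\ge 1}e^{-uk^{2}}\lesssim e^{-cu}\lesssim u^{-1/4}$, and
\[
q^{-1/2}(y^{2}+4\delta^{2})^{-1/4}\cdot u^{-1/4}
= q^{-1/2}(y^{2}+4\delta^{2})^{-1/4}\cdot q^{1/2}(y^{2}+4\delta^{2})^{1/4}\,y^{-1/4}
= y^{-1/4},
\]
with the $q$ and $(y^{2}+4\delta^{2})$ factors cancelling exactly. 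Your treatment of the $N\nmid q$ case via the nonzero shift $|m^{*}/q-1/N|\ge 1/(Nq)$ is fine and slightly more explicit than the paper; only the control of the complex Gaussian weight needs to be corrected.
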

\begin{proof} 
It suffices to prove that
\[
O\left(\frac{1}{q(y+2 i\delta)^{\frac{1}{2}}}
\sum_{m\neq \frac{q}{N}} S\left(\frac{p}{q},\frac{m}{q}\right)
 e^{-\frac{\pi}{y+2 i\delta}\left(\frac{m}{q}-\frac{1}{N}\right)^2}\right)=O(y^{-\frac{1}{4}}).
\]
Note that $~S\left(\frac{p}{q},\frac{m}{q}\right)=O(q^{\frac{1}{2}})$;~we can write
 
\[
\begin{array}{cc}\ds
    O\left(\frac{1}{q(y+2 i\delta)^{\frac{1}{2}}}
\sum_{m\neq \frac{q}{N}} S\left(\frac{p}{q},\frac{m}{q}\right)
 e^{-\frac{\pi}{y+2 i\delta}\left(\frac{m}{q}-\frac{1}{N}\right)^2}\right)=O\left(q^{-\frac{1}{2}}(y^2+4\delta^2)^{-\frac{1}{4}}\sum_{m\neq\frac{q}{N}}e^{- \frac{\pi y}{y^2+4\delta^2} \left(\frac{m}{q}-\frac{1}{N}\right)^2 } \right)
 \\\\ \ds~~~~~~~~~~~~~~~~~~~~~~~~~~~~~~~~~~~~~~~~~~~~~~~~~~~~~~~~~~~~~~~~~~~~
 =O\left(q^{-\frac{1}{2}}(y^2+4\delta^2)^{-\frac{1}{4}}\sum_{m=1 }^\infty e^{- \frac{\pi y}{y^2+4\delta^2}   \frac{m^2}{q^2}  } \right).
\end{array}
\]
 
However,  by our assumptions we have $1~\lesssim \frac{y}{q^2(y^2+4\delta^2)}$. Moreover,
\[
\sum_{m=1}^{\infty}e^{-C m^2 u}\lesssim e^{-\bar{C}u}\lesssim u^{-\frac{1}{4}}, \qquad 1\lesssim u.
\]
So,  the error term is 
\[
O\left(q^{-\frac{1}{2}}(y^2+4\delta^2)^{-\frac{1}{4}} \cdot y^{-\frac{1}{4}} q^{\frac{1}{2}}\cdot (y^2+4\delta^2)^{\frac{1}{4}}  \right)=O(y^{-\frac{1}{4}}),
\]
and (\ref{T_y estimate}) is proved.
\end{proof}

\begin{proof}[Proof of Theorem 1]
Let us turn to (\ref{decomp}),  and  we make the same decomposition of the $x$-interval 
as in \cite{Stein-Wainger1} .  For $~y$ of the order $~2^{-j}$, we make a Farey dissection of the $x$-interval $[0,1]$. Now, we choose all fractions $p/q,~(p,q)=1,~$ with $~q\le 2^{\frac{j}{2}}$, ~and let $I^j_{p/q}$ be the corresponding interval for $p/q$. Then, $~I^j_{p/q}\subset \{x: |x-p/q|\le 1/q(2^{j/2})\}$.
Then, we can define the major arcs and minor arcs as follows:
\[
I_{p/q}^j \quad \textit{is~a~ major ~arc ~if}\quad  q\le \frac{1}{10}2^{j/2},
\]
\[
I_{p/q}^j \quad \textit{is~a~ minor ~arc ~if}\quad   \frac{1}{10}2^{j/2}\le q\le 2^{j/2}.
\]
Additionally,  we define $\tilde{I}_{p/q}$, independent of $j$, as
\[
\tilde{I}_{p/q}=\{x:|x-p/q|\le 1/10q^2\}.
\]
The  key property of $\tilde{I}_{p/q}$ is that if $q\le q'\le 2q$, the intervals $\tilde{I}_{p'/q'}~$ and $~\tilde{I}_{p/q}$ are disjointed (or identical) (see \cite{Stein-Wainger1}).

Now, we apply (\ref{T_y estimate}). If $~x~$ belongs to a major arc, this implies 
\begin{equation}\label{major estimate}
     \begin{array}{cc}
    T_y(x) =
 S\left(\frac{p}{q},\frac{1}{N}\right) \frac{1}{q(y+2 i\delta)^{\frac{1}{2}}}+O(2^{\frac{j}{4}}),\qquad \textit{if} \qquad N|q,
 \\\\ \ds
~~ T_y(x)=O(2^{\frac{j}{4}}) \qquad~~~~~~~~~~~~~~~~~~~~~~~~~~~~~~ \textit{if} \qquad N\nmid q.
 \end{array}
\end{equation}
If $~x~$ belongs to a minor arc,
then 
\begin{equation}\label{minor estimate}
     \begin{array}{cc}
 T_y(x)=O(2^{\frac{j}{4}}) .
 \end{array}
\end{equation}
This is because
\[
\frac{1}{q} S\left(\frac{p}{q},\frac{1}{N}\right) =O(q^{-\frac{1}{2}})=O(2^{-j/4}),
\]
and
\[
|y+2 i \delta|^{-\frac{1}{2}}\le y^{-\frac{1}{2}}=O(2^{\frac{j}{2}}).
\]
The contribution from all the minor arcs is therefore
\[
O\left(\sum_{j=0}^\infty \int_{2^{-j-1}}^{2^{-j}}  2^{\frac{j}{4}} y^{-1+\frac{s}{2}} dy \right) =O\left(
\sum_{j=0}^\infty  2^{\frac{j}{4}}  2^{-j(-1+\frac{s}{2})} 2^{-j} \right)=O(1). \qquad \textit{since}\qquad s>\frac{1}{2}.
\]
Next,  we sum over the major arcs. Fix $\frac{p}{q}$; then,
\[
\begin{array}{cc}
 \frac{1}{q}\left|S\left(\frac{p}{q},\frac{1}{N}\right)\right| \sum_{j=0}^\infty \int_{2^{-j-1}}^{2^{-j}} \left|y+2 i (x-\frac{p}{q})\right|^{-\frac{1}{2}}y^{-1+\frac{s}{2}}dy \cdot \chi_{I^j_{\frac{p}{q}}}(x)
 \\\\ \ds
 \le C  \frac{1}{q}\left|S\left(\frac{p}{q},\frac{1}{N}\right)\right|    \left|x-\frac{p}{q}\right|^{-\frac{1}{2}+\frac{s}{2}}  \cdot \chi_{ \tilde{I}_{\frac{p}{q}}}(x), \qquad 0<s<1.
 \end{array}
\]
Therefore, the total contribution of the major arcs is majorized  
by
\begin{equation}\label{major sum}
    \sum_{\frac{p}{q},~ N|q} q^{-\frac{1}{2}} \left|x-\frac{p}{q}\right|^{-\frac{1}{2}+\frac{s}{2}}  \cdot \chi_{ \tilde{I}_{\frac{p}{q}}}(x).
\end{equation}
Rewrite  sum (\ref{major sum}) as
\begin{equation}\label{major sum2}
    \sum_{t=0}^{\infty}\sum_{\substack{p/q,~ N|q \\ 2^t\le q<2^{t+1} }} q^{-\frac{1}{2}} \left|x-\frac{p}{q}\right|^{-\frac{1}{2}+\frac{s}{2}}  \cdot \chi_{ \tilde{I}_{\frac{p}{q}}}(x).
\end{equation}
Note that there are at most $~O(N^{-1}2^{2t})$ disjointed intervals for $~2^s\le q<2^{t+1}~$. Moreover, $~|x-p/q|^{-1/2+s/2}~$ is uniformly of weak-type $ L^{\frac{2}{1-s}}$.  Thus, applying Lemma 
 one in \cite{Stein-Wainger1} , then 
 
\[
\sum_{\substack{p/q,~ N|q \\ 2^t\le q<2^{t+1}}} q^{-\frac{1}{2}} \left|x-\frac{p}{q}\right|^{-\frac{1}{2}+\frac{s}{2}}  \cdot \chi_{ \tilde{I}_{\frac{p}{q}}}(x).
\]
 has a weak-type $L^{\frac{2}{1-s}}$ norm bounded by 
 \[
 q^{-\frac{1}{2}}(N^{-1}2^{2t})^{\frac{1-s}{2}}=O(2^{-\frac{t}{2}}N^{-\frac{1-s}{2}}2^{t(1-s)})=O(N^{-\frac{1-s}{2}}2^{t(\frac{1}{2}-s)}),
 \]
and the sum in (\ref{major sum2}) converges if $s>\frac{1}{2}$. This means that $m_\alpha $is of weak-type $L^{\frac{2}{1-s}}[0,1]$. The proof is complete.
\end{proof}

\section{Multiplier Related to Euler's Identity }\label{sec3}
Let $\mathbb{H}$ be the Poincar\'e upper half plane consisting of $z=x+ i y$ where $x,y\in \R$ and $y>0$. 
Suppose $f$ is  defined on $\mathbb{H}$ and  has a Fourier expansion
\[
f(z)=\sum_{n=0}^\infty a_ne^{2\pi i nz}.
\]
We  consider
the multiplier
\begin{equation}\label{multiplier 2}
    m_{s,f}(x)=\sum_{n=1}^\infty \frac{ a_n e^{-2\pi i nx}}{n^s} .
\end{equation}
For  $s>0$, applying the well-known formula
\[
\int_0^\infty e^{-2\pi ny}y^s\frac{dy}{y}=\frac{\Gamma(s)}{(2\pi n)^s},
\]
we have
\begin{equation}\label{m est2}
  m_{s,f}(x)=c_s\int_0^1 f(-x+ i y) y^s \frac{dy}{y}+O(1),  
\end{equation}
where $c_s>0$ is a constant that only depends on $s.$

 Now, let us consider the case
\[
f(z)=\prod_{n=1}^\infty (1-e^{2\pi inz})=\sum_{n=0}^\infty a_n e^{2\pi  i  nz},\quad z\in \mathbb{H},
\]
and recall Euler's identity:
\[
f(z)=\prod_{n=1}^\infty (1-e^{2\pi i nz})=\sum_{n=-\infty}^{\infty} (-1)^ne^{2\pi i\frac{3n^2+n}{2}z}
\]
Hence, we can write $f(z)=f_1(z)+f_2(z)$,
where
\[
f_1(z)=\sum_{n\in \Z}e^{2\pi i (6n^2+n)z},\qquad f_2(z)=-\sum_{n\in \Z}e^{2\pi i (6n^2+7n+2)z}.
\]
Our analysis will then proceed by setting 

\begin{equation}\label{discrete2}
   \int_0^1 f(-x + i y)y^s\frac{dy}{y}=\sum_{j=0}^\infty \int_{2^{-j-1}}^{2^{-j}} f(-x+ i y)y^s\frac{dy}{y}, 
\end{equation}
and estimating $f(-x+ i y)$ when $y$ is of a fixed order of magnitude and $x$ is "sufficiently close " to an appropriate rational $p/q$, with ~$(p,q)=1,~0<p/q\le 1$.

\begin{lemma}
    Let $x\in [0,1]$.~If 
    \[
   x=p/q+\delta, \qquad q\lesssim y^{-\frac{1}{2}},\qquad q|\delta|\lesssim y^{\frac{1}{2}},
    \]
    then 
\begin{equation}\label{fz2}
  f_1(-x+ i y)=\frac{1}{\sqrt{12}q(y+ i\delta)^{1/2}}e^{\frac{\pi (y+ i \delta)}{12}} S(p/q) +O(y^{-1/4}),  
\end{equation}
where
\[
S(p/q)=\sum_{\ell=1}^q e^{-2\pi  i \frac{p}{q}(6\ell^2+\ell)}.
\]

\end{lemma}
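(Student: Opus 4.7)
The plan is to follow the Poisson summation/$\Theta$-function template used for $S_y(x)$ in Section~\ref{sec2}, adapted to the quadratic progression $6n^2+n$ coming from Euler's identity. Write $x=p/q+\delta$ and $w:=y+i\delta$, and partition $n=mq+\ell$ with $1\le\ell\le q$, $m\in\Z$. Since $6n^2+n\equiv 6\ell^2+\ell\pmod q$, the rational part of $x$ contributes only the $\ell$-dependent phase $e^{-2\pi i(6\ell^2+\ell)p/q}$, so that
\[
f_1(-x+iy)=\sum_{\ell=1}^{q}e^{-2\pi i(6\ell^2+\ell)p/q}\sum_{m\in\Z}e^{-2\pi w(6(mq+\ell)^2+(mq+\ell))}.
\]

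Next I complete the square $6n^2+n=6(n+\tfrac{1}{12})^2-\tfrac{1}{24}$, which extracts the scalar prefactor $e^{\pi w/12}$ matching the statement, and apply the Poisson summation formula to the inner $m$-sum. After the change of variable $u=(tq+\ell)+\tfrac{1}{12}$ together with the Gaussian Fourier identity
\[
\int_{\R}e^{-12\pi w u^2-2\pi iu\eta}\,du=(12w)^{-1/2}e^{-\pi\eta^2/(12w)},
\]
interchanging $\ell$- and frequency-sums yields
\[
f_1(-x+iy)=\frac{e^{\pi w/12}}{q\sqrt{12w}}\sum_{\xi\in\Z}e^{\pi i\xi/(6q)}\,e^{-\pi\xi^2/(12wq^2)}\,G(p,q,\xi),
\]
with $G(p,q,\xi)=\sum_{\ell=1}^{q}e^{2\pi i(-6p\ell^2+(\xi-p)\ell)/q}$. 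The $\xi=0$ term is exactly $\frac{e^{\pi w/12}S(p/q)}{\sqrt{12}\,q\,(y+i\delta)^{1/2}}$, the claimed main term, so everything reduces to controlling the $\xi\neq 0$ tail.

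The two ingredients for the tail estimate are the uniform quadratic Gauss-sum bound $|G(p,q,\xi)|\lesssim\sqrt q$ (which holds because $(p,q)=1$ forces $\gcd(6p,q)\le 6$) and the Gaussian bound $\sum_{\xi\neq 0}e^{-\pi\xi^2 y/(12q^2|w|^2)}\le\sqrt{\pi\cdot 12q^2|w|^2/y}\lesssim q|w|/\sqrt y$, obtained by comparing to the Gaussian integral. Combining, the $\xi\neq 0$ contribution is of order $\sqrt q\,|w|^{1/2}/\sqrt y$. Under the hypotheses $q\lesssim y^{-1/2}$ and $q|\delta|\lesssim y^{1/2}$ one has $\delta^2\lesssim y/q^2$, so the subadditivity of $t\mapsto t^{1/4}$ gives $(y^2+\delta^2)^{1/4}\lesssim y^{1/2}+y^{1/4}/\sqrt q$, whence $\sqrt q\,|w|^{1/2}\lesssim\sqrt q\,y^{1/2}+y^{1/4}\lesssim y^{1/4}$. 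This collapses the error to the required $O(y^{-1/4})$.

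The main technical obstacle is precisely this last error analysis: the interplay between the Gauss-sum cancellation, the Gaussian $\xi$-decay, and the simultaneous constraints on $q$ and $\delta$ is tight, and one has to verify that none of the intermediate estimates is wasteful. All other steps are formal manipulations that parallel the derivation of formula~(\ref{S_y 2}) in Section~\ref{sec2}, with the minor twist that here the progression is $6n^2+n$ rather than $n^2$, producing the completion-of-square remainder $e^{\pi w/12}$.
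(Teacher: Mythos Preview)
Your proof is correct and follows essentially the same approach as the paper: partition $n=mq+\ell$, apply Poisson summation to the inner $m$-sum, isolate the zero-frequency term as the main contribution, and bound the tail via the quadratic Gauss-sum estimate $|G(p,q,\xi)|\lesssim\sqrt q$ together with Gaussian decay. The only cosmetic differences are that the paper first treats real $y$ and then substitutes $y\to y+i\delta$ (rather than working with $w=y+i\delta$ throughout), and it bounds the tail sum by $u^{-1/4}$ for $u\gtrsim 1$ rather than by the Gaussian-integral bound $c^{-1/2}$; both choices lead to the same $O(y^{-1/4})$ error under the hypotheses $q\lesssim y^{-1/2}$, $q|\delta|\lesssim y^{1/2}$.
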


\begin{proof}
First, consider $f_1(-\frac{p}{q}+ i y)$. Write $\sum_{n\in\Z}=\sum_{\ell=1}^q\sum_{m\in\Z}$ and $n=mq+\ell$. Then, $f_1(-\frac{p}{q}+ i y)$ equals

\begin{equation}\label{decom2}
    \sum_{\ell=1}^q e^{-2\pi  i\frac{p}{q} (6\ell^2+\ell) } \left\{
\sum_{m\in \Z} e^{-2\pi y[6(mq+\ell)^2+mq+\ell]} \right\}
\end{equation}
For the inner sum,  set $h(x)=e^{-2 \pi y[6(xq+\ell)^2+xq+\ell]}$. Then, its Fourier transform is 
\[
\begin{array}{cc}\ds
\Hat{h}(\xi)=\int_{-\infty}^{\infty} e^{-2\pi y[6(xq+\ell)^2+xq+\ell]}e^{-2\pi i x\cdot \xi} dx  \\\\ \ds
=\frac{1}{q}\int_{-\infty}^\infty e^{-2\pi y (6u^2+u)}e^{-2\pi i \left(\frac{u-\ell}{q}\right)\xi}du \\\\ \ds
=\frac{1}{\sqrt{12}qy^{1/2}}e^{\frac{\pi y}{12}} e^{2\pi i \left(\frac{\ell \xi}{q}+\frac{\xi}{12q}\right)} e^{-\frac{\pi \xi^2}{12 q^2y}}.

\end{array}
\]
 Now, using the Poisson summation formula, we have
\[
f_1(-\frac{p}{q}+ i y)=\frac{1}{\sqrt{12}qy^{1/2}}e^{\frac{\pi y}{12}} \sum_{m\in\Z} S(p/q,m/q) e^{2\pi i \frac{m}{12 q}} e^{-\frac{\pi m^2}{12 q^2 y}},
\]
Therefore, let $y\rightarrow y+i\delta$ , which  yields

\begin{equation}\label{f1 decom}
    f_1(-x+ i y)=\frac{1}{\sqrt{12}q(y+ i\delta)^{1/2}}e^{\frac{\pi (y+ i \delta)}{12}} \sum_{m\in\Z} S(p/q,m/q) e^{2\pi i \frac{m}{12 q}} e^{-\frac{\pi m^2}{12 q^2 (y+i \delta)}}.
\end{equation}
Now, set $S(p/q)=S(p/q,0)$.~
From (\ref{f1 decom}), we see, upon isolating the term $m=0$,
\[
\begin{array}{cc}\ds
    f_1(-x+ i y)=\frac{1}{\sqrt{12}q(y+ i\delta)^{1/2}}e^{\frac{\pi (y+ i \delta)}{12}} S(p/q) \\\\ \ds
    +O(q^{-1} (y^2+\delta^2)^{-1/4}\cdot q^{\frac{1}{2}}\cdot \sum_{m=1}^\infty e^{-\pi m^2y/(12 q^2(y^2+\delta^2))} ).
\end{array}
\]
Hence, similar to the proof of  Lemma 1, (\ref{fz2}) is proved.
\end{proof}

\begin{theorem}
    When $1/4<s<1/2$, ~~$m_{s,f}$ belongs to weak-type $L^{2/(1-2s)}[0,1]$.
\end{theorem}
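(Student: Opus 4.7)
The plan is to imitate the proof of Theorem~2.1, using (3.2) to reduce the claim for $m_{s,f}$ to bounding $\int_0^1 f(-x+iy)\,y^{s-1}\,dy$, with Lemma~3.2 playing the role of the asymptotic (2.7). After the splitting $f=f_1+f_2$ furnished by Euler's identity, I would treat $f_1$ in full; the piece $f_2$ admits the same treatment, since the quadratic $6n^2+7n+2$ has the same leading coefficient as $6n^2+n$ and Poisson summation produces an analogue of Lemma~3.2 with only the Gauss sum and a phase factor modified. Following~(3.7), I split the $y$-integral dyadically, and for $y\sim 2^{-j}$ perform a Farey dissection of $[0,1]$ into major arcs ($q\le\tfrac{1}{10}2^{j/2}$) and minor arcs ($\tfrac{1}{10}2^{j/2}<q\le 2^{j/2}$) exactly as in Section~2.

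For the minor-arc contribution, the elementary Gauss-sum bound $|S(p/q)|=O(q^{1/2})$ combined with Lemma~3.2 gives $f_1(-x+iy)=O(q^{-1/2}y^{-1/2})+O(y^{-1/4})=O(2^{j/4})$ on a minor arc at scale $j$. Integrating against $y^{s-1}$ on the dyadic slice $[2^{-j-1},2^{-j}]$ contributes $O(2^{j(1/4-s)})$, which is summable in $j\ge 0$ as long as $s>\tfrac{1}{4}$ and therefore yields a uniformly bounded contribution in $x$.

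For the major-arc contribution, I would isolate the main term in Lemma~3.2; the factor $e^{\pi(y+i\delta)/12}$ is bounded on $[0,1]$ and can be absorbed. A standard split of $\int |y+i\delta|^{-1/2}y^{s-1}\,dy$ into the regimes $y\le|\delta|$ and $y>|\delta|$, summed over $j$, yields the bound $\lesssim|\delta|^{s-1/2}$ for $0<s<\tfrac{1}{2}$. The total major-arc mass is therefore dominated by
\[
\sum_{t\ge 0}\sum_{\substack{(p,q)=1\\ 2^t\le q<2^{t+1}}} q^{-1/2}\,|x-p/q|^{s-1/2}\,\chi_{\tilde{I}_{p/q}}(x),
\]
and since the intervals $\tilde{I}_{p/q}$ in each dyadic range are mutually disjoint and number $O(2^{2t})$, Lemma~1 of \cite{Stein-Wainger1} yields a weak-$L^{2/(1-2s)}$ bound of order $2^{-t/2}\cdot(2^{2t})^{(1-2s)/2}=2^{t(1/2-2s)}$, which is summable in $t$ precisely when $s>\tfrac{1}{4}$.

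The principal obstacle is the tight window $(1/4,1/2)$: the upper bound $s<\tfrac{1}{2}$ is forced by requiring $2/(1-2s)$ to be a finite positive exponent, while $s>\tfrac{1}{4}$ is forced simultaneously by the two geometric series $\sum_j 2^{j(1/4-s)}$ from the minor arcs and $\sum_t 2^{t(1/2-2s)}$ from the major arcs, so each obstruction is sharp within this scheme. A secondary point is verifying that the reduction of $f_2$ to the $f_1$ analysis goes through cleanly and that the bound $|S(p/q)|=O(q^{1/2})$ for the polynomial $6\ell^2+\ell$ holds uniformly in coprime $(p,q)$, which is a standard Weyl/Gauss-sum computation.
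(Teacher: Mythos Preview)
Your proposal is correct and follows essentially the same route as the paper: reduction via (3.2) to $\int_0^1 f_1(-x+iy)\,y^{s-1}\,dy$, dyadic splitting in $y$ with the identical Farey major/minor arc dissection, the minor-arc bound $f_1=O(2^{j/4})$ summed via $\sum_j 2^{j(1/4-s)}$, and the major-arc contribution controlled by $\sum_{p/q} q^{-1/2}|x-p/q|^{s-1/2}\chi_{\tilde I_{p/q}}$ and handled via Lemma~1 of \cite{Stein-Wainger1}. The only cosmetic difference is that the paper evaluates $\int_0^\infty |y+i\delta|^{-1/2}y^{s-1}\,dy=C|\delta|^{s-1/2}$ by scaling rather than by splitting at $y=|\delta|$, and the paper simply asserts that the $f_2$ case is analogous without further comment.
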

\begin{proof}
It suffices to prove that $m_{s,f_1}\in L^{2/(1-2s),\infty}[0,1]$.   Let us turn to (\ref{m est2}),  and we make the same decomposition of the $x$-interval as in \cite{Stein-Wainger1} .

Now, we apply (\ref{fz2}). If $~x~$ belongs to a major arc, this implies that

\begin{equation}\label{major estimate2}
     \begin{array}{cc}
f_1(-x+ i y)=\frac{1}{\sqrt{12}q(y+ i\delta)^{1/2}}e^{\frac{\pi (y+ i \delta)}{12}} S(p/q) +O(2^{j/4}),
 \end{array}
\end{equation}
If $x$ belongs a minor arc,
then

\begin{equation}\label{minor estimate2}
     \begin{array}{cc}
 f_1(-x+i y)=O(2^{j/4}),
 \end{array}
\end{equation}
This is because
\[
\frac{1}{q} S\left(\frac{p}{q}\right) =O(q^{-\frac{1}{2}})=O(2^{-j/4}),
\]
and
\[
|y+ i \delta|^{-\frac{1}{2}}\le y^{-\frac{1}{2}}=O(2^{\frac{j}{2}}).
\]
The contribution from all the minor arcs is therefore
\[
O\left(\sum_{j=0}^\infty \int_{2^{-j-1}}^{2^{-j}}  2^{\frac{j}{4}} y^s \frac{dy}{y} \right) =O\left(
\sum_{j=0}^\infty  2^{\frac{j}{4}}  2^{-j(s-1)} 2^{-j} \right)=O(1),\qquad \textit{since}\qquad s>\frac{1}{4}.
\]
Next, we sum over the major arcs. Fix $\frac{p}{q}$. Then,
\[
\begin{array}{cc}
 \frac{1}{q}\left|S\left(\frac{p}{q}\right)\right| \sum_{j=0}^\infty \int_{2^{-j-1}}^{2^{-j}} \left|y+ i (x-\frac{p}{q})\right|^{-\frac{1}{2}}y^s\frac{dy}{y} \cdot \chi_{I^j_{\frac{p}{q}}}(x)
 \\\\ \ds
 \le  C  \frac{1}{q}\left|S\left(\frac{p}{q}\right)\right|    \left|x-\frac{p}{q}\right|^{s-\frac{1}{2}} \cdot \chi_{ \tilde{I}_{\frac{p}{q}}}(x), 
 \end{array}
\]
because
\[
\int_0^1 \left|y+ i \delta\right|^{-\frac{1}{2}}y^s\frac{dy}{y} \le \int_0^\infty \left|y+ i \delta\right|^{-\frac{1}{2}}y^s\frac{dy}{y} =C |\delta|^{s-1/2},
\]
as long as $0<s<1/2$.

Therefore, the total contribution of the major arcs is majorized by
\begin{equation}\label{major sum3}
   \sum_{p/q} q^{-\frac{1}{2}} \left|x-\frac{p}{q}\right|^{s-1/2 }  \cdot \chi_{ \tilde{I}_{\frac{p}{q}}}(x). 
\end{equation}
Rewrite sum (\ref{major sum3}) as
\[
\sum_{t=0}^{\infty}\sum_{\substack{p/q,~ \\  2^t\le q<2^{t+1}}} q^{-\frac{1}{2}} \left|x-\frac{p}{q}\right|^{s-1/2 }  \cdot \chi_{ \tilde{I}_{\frac{p}{q}}}(x).
\]
Note that there are at most $~O(2^{2t})$ disjointed intervals for $2^t\le q<2^{t+1}$. Moreover, $~|x-p/q|^{s-1/2 }~$ is uniformly of weak-type $L^{\frac{2}{1-2s } }[0,1]$.  Thus, applying Lemma 
 one in \cite{Stein-Wainger1} , then 
 
\[
\sum_{\substack{ p/q, \\ 2^t\le q<2^{t+1}}} q^{-\frac{1}{2}} \left|x-\frac{p}{q}\right|^{s-1/2 }  \cdot \chi_{ \tilde{I}_{\frac{p}{q}}}(x).
\]
 has a weak $L^{\frac{2}{1-2s }}$ norm bounded by 
 \[
 q^{-\frac{1}{2}} 2^{2t(1/2-s) }=O(2^{-\frac{t}{2}} 2^{t(1-2s)})=O( 2^{t(1/2-2s)}),
 \]
and the sum in (\ref{major sum3}) converges if $s>1/4$. This means that $m_{s,f_1}\in L^{2/(1-2s),\infty}[0,1]$. Therefore, $m_{s,f}$ belongs to weak-type $L^{2/(1-2s)}[0,1]$.
\end{proof}

\subsection*{The Corresponding Discrete Fractional Integral}

Let $g$ be a function of $\Z$. The twisting discrete fractional operator $\I_{s,f}$ is  defined as 
\[
\I_{s,f} g(m)=\sum_{n=1}^{\infty} \frac{a_n}{n^s}g(m-n) ,\quad m\in\Z^1
\]
and has acting functions defined on $\Z^1$. If $g(n)=1,~n\in \Z$,~ then ~$\I_{s,f}1=\sum_{n=1}^\infty a_n n^{-s}$ is usually called the $L-$ function of $f.~$  On the other hand, if~ $a_n\equiv 1,n=1,2,\dots,$ ~then we can write 
\[
\I_sg(m)=\sum_{n=1}^{\infty} \frac{g(m-n)}{n^s} ,\quad m\in\Z^1,
\]
Stein and Wainger \cite{Stein-Wainger1} proved the following theorem.

\begin{theorem}
  For  $0<s<1$, then
\[
||\I_s f||_{\ell^q(\Z)}\le C||f||_{\ell^p(\Z)},
\]
if  
\[
\frac{1}{q}\le \frac{1}{p}-1+s,\qquad 1<p<q<\infty.
\]  
\end{theorem}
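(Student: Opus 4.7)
The plan is to deduce the discrete $\ell^p\to\ell^q$ inequality from the classical Hardy--Littlewood--Sobolev (HLS) bound for the continuous Riesz potential $I_{1-s}h(x)=\int_{\R}h(y)|x-y|^{-s}\,dy$, which satisfies $\|I_{1-s}h\|_{L^{q_0}(\R)}\le C\|h\|_{L^p(\R)}$ exactly when $1/q_0=1/p-1+s$ and $1<p<q_0<\infty$. Two quick reductions come first: replacing $f$ by $|f|$ one may assume $f\ge 0$, and the nesting $\|g\|_{\ell^{q}}\le\|g\|_{\ell^{q_0}}$ for $q\ge q_0$ reduces the problem to the endpoint $1/q=1/p-1+s$. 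The hypothesis $q<\infty$ then forces $p<1/(1-s)$, exactly the range where continuous HLS applies.

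The bridge between the discrete and continuous settings will be the step extension $F:\R\to[0,\infty)$ defined by $F(x)=f(n)$ for $x\in[n-\tfrac12,n+\tfrac12)$, so that $\|F\|_{L^p(\R)}=\|f\|_{\ell^p(\Z)}$. Setting $G:=I_{1-s}F$, continuous HLS yields $\|G\|_{L^{q_0}(\R)}\le C\|f\|_{\ell^p}$. The main pointwise estimate I would establish is
\[
\I_s f(m)\le C\int_{m-1/2}^{m+1/2}G(x)\,dx,\qquad m\in\Z,
\]
obtained by interchanging summation and integration via Tonelli:
\[
\int_{m-1/2}^{m+1/2}G(x)\,dx=\sum_{n}f(n)\iint_{[-1/2,1/2]^2}|(m-n)+u-v|^{-s}\,du\,dv.
\]
For every $n\ne m$ the inner double integral is bounded below by $c\,|m-n|^{-s}$ with a constant $c>0$ depending only on $s$, while the $n=m$ term is non-negative and can be discarded, leaving exactly $c\,\I_s f(m)$.

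Armed with this comparison, Jensen's inequality on the unit interval $[m-\tfrac12,m+\tfrac12]$ (valid since $q_0\ge 1$) gives
\[
\sum_m\I_s f(m)^{q_0}\le C^{q_0}\sum_m\int_{m-1/2}^{m+1/2}G(x)^{q_0}\,dx=C^{q_0}\|G\|_{L^{q_0}(\R)}^{q_0}\le (CC')^{q_0}\|f\|_{\ell^p(\Z)}^{q_0},
\]
which is the desired endpoint inequality.

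The main technical obstacle is verifying the uniform lower bound $\iint_{[-1/2,1/2]^2}|k+u-v|^{-s}\,du\,dv\ge c|k|^{-s}$ for $k\in\Z\setminus\{0\}$, with $c>0$ independent of $k$. For $|k|\ge 2$ this follows at once from the two-sided bound $|k|/2\le|k+u-v|\le 3|k|/2$, but the borderline case $|k|=1$ is delicate because $k+u-v$ can vanish; there one must use $s<1$ to ensure both that the local singularity is integrable and that the resulting constant is strictly positive.
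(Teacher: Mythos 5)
Your argument is correct. Note, however, that the paper does not actually prove this statement: it is quoted as Stein and Wainger's theorem with a citation to \cite{Stein-Wainger1}. Your transference proof is essentially the ``implication method'' that the paper itself uses in Section \ref{sec5} to prove the discrete Stein--Weiss inequality (Theorem 7): there too one passes to the step extension $F(x)=f(n)$ on the unit cube around $n$, compares the discrete kernel with the continuous one, and invokes the continuous inequality. The only real difference is in how the comparison is organized. The paper dominates the discrete kernel pointwise, $|n-m|^{-s}\le C|x-y|^{-s}$ for $x\in Q+n$, $y\in Q+m$, $m\neq n$, which gives the pointwise bound $\I_s f(n)\le C\,I_{1-s}F(x)$ for every $x$ in the unit interval around $n$, and then sums the $q_0$-th powers directly; you instead integrate $G=I_{1-s}F$ over the interval, obtain the averaged comparison $\I_s f(m)\le C\int_{m-1/2}^{m+1/2}G$, and apply Jensen. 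Both work; the pointwise version avoids Jensen and the lower-bound computation on the double integral altogether. On that last point, your worry about $|k|=1$ is unnecessary: since you only need a \emph{lower} bound for a nonnegative integrand, it suffices to restrict to the quarter-square $\{u\ge 0\ge v\}$, where $|1+u-v|\in[1,2]$, giving $\iint_{[-1/2,1/2]^2}|k+u-v|^{-s}\,du\,dv\ge 2^{-s}/4$ with no integrability issue; finiteness of the $n=m$ term is likewise irrelevant because that term is simply discarded. Your two preliminary reductions (to $f\ge0$ and, via $\ell^{q_0}\subset\ell^{q}$, to the endpoint $1/q_0=1/p-1+s$, which is also how the paper relaxes the exponent condition at the end of its Section \ref{sec5} proof) are sound, as is the verification that $1<p<q_0<\infty$ holds in the stated range so that the continuous Hardy--Littlewood--Sobolev theorem applies.
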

Then it is easy to see that $a_n=\pm 1,0$ for all $n\in \mathbb{N}$. Then, if $1/q\le 1/p-1+s,~1<p<q<\infty$,

\[
||\I_{s,f} g||_{\ell^q(\Z)}\le C||g||_{\ell^p(\Z)}.
\]
However, if we take the cancellation property of $a_n$ into consideration we have a better result in the range of $1/4<s<1/2$.

\begin{theorem}
  For  $1/4<s<1/2$, then
\[
||\I_{s,f}g||_{\ell^q(\Z)}\le C||g||_{\ell^p(\Z)},
\]
if  
\[
\frac{1}{q}\le \frac{1}{p}-\frac{1}{2}+s,\qquad 1<p\le 2\le q<\infty.
\]  
\end{theorem}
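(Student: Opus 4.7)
The operator $\I_{s,f}$ acts by convolution on $\Z$ with kernel $K(n)=a_n n^{-s}$ (supported on $n\ge 1$), whose Fourier multiplier on $[0,1]$ is the function $m_{s,f}$. By Theorem~3.2, $m_{s,f}\in L^{r_0,\infty}[0,1]$ with $r_0:=2/(1-2s)$. The plan is to combine this spectral bound with $\ell^\infty$- and $\ell^2$-size information on $K$ itself, and then fill in the full region by Riesz--Thorin interpolation.

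The spectral endpoint gives $\I_{s,f}\colon\ell^2\to\ell^{1/s}$ strongly. Plancherel puts $\hat g\in L^{2,2}$; Lorentz--H\"older combined with the spectral bound yields $\|m_{s,f}\hat g\|_{L^{1/(1-s),2}}\lesssim\|g\|_{\ell^2}$ (using $1/r_0+1/2=1-s$); the Lorentz form of Hausdorff--Young on the torus (valid since $1<1/(1-s)<2$) converts this into $\I_{s,f}g\in\ell^{1/s,2}$; and the elementary Lorentz inclusion $\ell^{1/s,2}\subset\ell^{1/s}$ (valid because $2<1/s$ when $s<1/2$) finishes. By duality, using that $\overline{m_{s,f}}$ has the same Lorentz norm, one also obtains $\I_{s,f}\colon\ell^{1/(1-s)}\to\ell^2$.

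Three further anchor endpoints complete the picture. Trivially $\|K\|_{\ell^\infty}\le 1$ gives $\I_{s,f}\colon\ell^1\to\ell^\infty$; and since $a_n$ is supported on the sparse set of generalized pentagonal numbers (density $O(N^{1/2})$ up to $N$), the tail sum $\sum_n a_n^2/n^{2s}$ converges precisely when $4s>1$, so $K\in\ell^2$ and Young's inequality yields $\I_{s,f}\colon\ell^1\to\ell^2$ and $\ell^2\to\ell^\infty$. The five strong endpoint $(p,q)$-pairs $(1,\infty),(1,2),(2,\infty),(2,1/s),(1/(1-s),2)$ have $(1/p,1/q)$-coordinates whose convex hull is exactly the closed polygon $\{1/2\le 1/p\le 1,\,0\le 1/q\le 1/2,\,1/q\le 1/p-1/2+s\}$, and iterated Riesz--Thorin interpolation covers its interior, matching the theorem's region $\{1<p\le 2,\,2\le q<\infty,\,1/q\le 1/p-1/2+s\}$. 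The main technical hurdle is the Lorentz bookkeeping at the spectral endpoint: extracting a strong $\ell^{1/s}$ bound from a purely weak-$L^{r_0}$ multiplier estimate requires Lorentz--H\"older (which gains from $\hat g\in L^{2,2}$), the Lorentz Hausdorff--Young inequality (preserving the second Lorentz index), and the inclusion $\ell^{1/s,2}\subset\ell^{1/s}$.
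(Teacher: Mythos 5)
Your proposal is correct, and its engine is the same as the paper's: the ``spectral endpoint'' computation (Plancherel/Paley--Hausdorff--Young in Lorentz form, the Lorentz--H\"older product inequality against the weak-$L^{2/(1-2s)}$ bound for $m_{s,f}$, then duality via $\overline{m_{s,f}}$) is exactly the proof of the paper's Lemma 3, the Stein--Wainger ``folk'' lemma, which the paper then feeds Theorem 2 into; interpolating between your two critical endpoints $(2,1/s)$ and $(1/(1-s),2)$ reproduces the critical line $1/q=1/p-1/2+s$ with $1<p\le 2\le q<\infty$, just as the paper does. Where you genuinely diverge is in the sub-critical region $1/q<1/p-1/2+s$: the paper dispatches it in one line by the nesting $\ell^{q_1}(\Z)\subset\ell^{q_2}(\Z)$ for $q_1\le q_2$ (applied on the source or target side as needed), whereas you manufacture three additional strong endpoints $(1,\infty)$, $(1,2)$, $(2,\infty)$ from $\|K\|_{\ell^\infty}\le 1$ and $K\in\ell^2$ --- the latter correctly exploiting the pentagonal-number sparsity of $a_n$ and the hypothesis $s>1/4$ --- and then take iterated Riesz--Thorin over the convex hull of the five points, which is indeed the polygon you describe. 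Both routes are valid; yours is heavier but more self-contained in that it quantifies the kernel itself, and it even captures the excluded boundary cases $p=1$ and $q=\infty$. One small imprecision: the theorem's region is not contained in the \emph{interior} of your hull (it includes the critical segment and parts of the edges $p=2$ and $q=2$), but since all five of your endpoint bounds are strong type, Riesz--Thorin yields the closed hull, so the argument goes through as stated.
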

In order to obtain the the desired $\ell^p\rightarrow \ell^q$ inequalities, we need a “folk” Lemma due to Stein and Wainger \cite{Stein-Wainger1} concerning a convolution operator $\I$ with multiplier $m$, viz.,

\[
\widehat{\I f}(x)=m(x)\Hat{f}(x).
\]

\begin{lemma}
    Assume $m(x)$ is of weak-type $ L^{r}[0,1]$. Then, $\I$ is bounded from $\ell^p(\Z^1)$ to $\ell^q(\Z^1)$ if 
    \[
    \frac{1}{p}-\frac{1}{q}=\frac{1}{r},\qquad 1<p\le 2\le q<\infty.
    \]
\end{lemma}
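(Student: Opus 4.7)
The plan is to dualize the desired $\ell^p\to\ell^q$ estimate into a trilinear integral on the torus and then apply a Lorentz-space version of the Hölder inequality. By Parseval on $[0,1]$, $\langle \I f,g\rangle=\int_0^1 m(x)\Hat{f}(x)\overline{\Hat{g}(x)}\,dx$, so the inequality $\|\I f\|_{\ell^q}\le C\|f\|_{\ell^p}$ is equivalent, by duality of $\ell^q$ with $\ell^{q'}$, to
\[
\Bigl|\int_0^1 m\,\Hat{f}\,\overline{\Hat{g}}\,dx\Bigr|\le C\,\|m\|_{L^{r,\infty}}\|f\|_{\ell^p}\|g\|_{\ell^{q'}} \qquad\text{for all }g\in\ell^{q'}.
\]

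The key input is the Hardy--Littlewood sharpening of Hausdorff--Young: for $1<p\le 2$, the Fourier transform maps $\ell^p(\Z)$ continuously into the Lorentz space $L^{p',p}([0,1])$. This follows by real interpolation between the trivial bound $\ell^1\to L^\infty$ and Plancherel $\ell^2\to L^2$, using the identifications $\ell^p=(\ell^1,\ell^2)_{\theta,p}$ and $L^{p',p}=(L^\infty,L^2)_{\theta,p}$ for $\theta=2/p'$. Applying this to $f\in\ell^p$, and (since $1<q'\le 2$) also to $g\in\ell^{q'}$, gives $\|\Hat{f}\|_{L^{p',p}}\le C\|f\|_{\ell^p}$ and $\|\Hat{g}\|_{L^{q,q'}}\le C\|g\|_{\ell^{q'}}$.

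With these three Lorentz memberships in hand, I apply the three-factor Hölder inequality in Lorentz spaces on $[0,1]$ to $m\cdot \Hat{f}\cdot \overline{\Hat{g}}$. The primary-index identity $1/r+1/p'+1/q=1$ is exactly the hypothesis $1/p-1/q=1/r$, and the secondary-index inequality $1/\infty+1/p+1/q'=1+1/r\ge 1$ is automatic. The generalized Hölder then produces
\[
\int_0^1 |m\,\Hat{f}\,\Hat{g}|\,dx \le C\,\|m\|_{L^{r,\infty}}\|\Hat f\|_{L^{p',p}}\|\Hat g\|_{L^{q,q'}}\le C\,\|m\|_{L^{r,\infty}}\|f\|_{\ell^p}\|g\|_{\ell^{q'}},
\]
which is exactly the dualized statement.

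The main delicacy is the Lorentz bookkeeping at the secondary index. Plain Hausdorff--Young would place $\Hat{f}$ only in $L^{p',p'}=L^{p'}$, whose secondary index $p'\ge 2$ would force the Lorentz-Hölder constraint $1/p'+1/q\ge 1$ and thereby confine one to the diagonal case $p=q$. The Hardy--Littlewood refinement to $L^{p',p}$ drops the secondary index down to $p\le 2$, which is precisely what is required to pair against weak-type $L^r$ while $p$ and $q$ roam freely over the off-diagonal region $1<p\le 2\le q<\infty$ subject to $1/p-1/q=1/r$.
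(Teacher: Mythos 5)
Your argument is correct, and it rests on the same two pillars as the paper's proof (Paley's refinement of Hausdorff--Young, $\ell^p\to L^{p',p}$ for $1<p\le 2$, and H\"older/multiplication in Lorentz spaces), but it is organized differently. The paper first treats the endpoint $q=2$: there $\widehat{\I f}=m\hat f\in L^{r,\infty}\cdot L^{p',p}\subset L^{2,p}\subset L^2$, so $\I:\ell^p\to\ell^2$ by Plancherel; the case $p=2$ is then obtained by passing to the adjoint, and the full range $1<p\le 2\le q<\infty$ with $1/p-1/q=1/r$ follows by interpolating between these two bounds. You instead dualize once and for all, writing $\langle \I f,g\rangle=\int_0^1 m\hat f\overline{\hat g}$, apply Paley to both $f$ and $g\in\ell^{q'}$ (legitimate since $1<q'\le 2$), and close with a three-factor Lorentz H\"older whose primary indices sum to $1$ exactly because $1/p-1/q=1/r$ and whose secondary indices satisfy $1/p+1/q'\ge 1$ precisely because $p\le 2\le q$; your remark that plain Hausdorff--Young ($L^{p'}=L^{p',p'}$) would fail this secondary-index test is the same point the paper exploits via $L^{2,p}\subset L^2$ for $p\le 2$. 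What each route buys: yours handles all admissible $(p,q)$ in one symmetric stroke, with no adjoint reduction and no interpolation step, and makes the role of the off-diagonal constraint transparent; the paper's route needs only the two-factor O'Neil product estimate plus Plancherel and standard interpolation, i.e.\ slightly lighter Lorentz machinery, at the cost of a two-endpoint-plus-interpolation structure. The only things you should make explicit are the reference for the trilinear Lorentz--H\"older inequality (it follows from O'Neil's product estimate plus Lorentz duality, under exactly the primary/secondary index conditions you state) and the degenerate case $p=q=2$, $r=\infty$, which is trivial since $L^{\infty,\infty}=L^\infty$.
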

\begin{proof}
First, assume that $q=2$, so that $\frac{1}{p}=\frac{1}{2}+\frac{1}{r}$. Then, for $f\in \ell^p(\Z^1)$, using Paley's version of the Hausdorff--Young inequality, $~\Hat{f}\in L^{p',p}[0,1]$, where $\frac{1}{p}+\frac{1}{p'}=1$. Therefore, by the multiplicative property of Lorentz spaces, 
\[
\widehat{\I f}(x)=m(x)\Hat{f}(x)\in L^{r,\infty}\cdot L^{p',p}\subset L^{p_0,q_0},
\]
where $\frac{1}{p_0}=\frac{1}{p'}+\frac{1}{2}=\frac{1}{2},~\frac{1}{q_0}=\frac{1}{p}+\frac{1}{\infty}=\frac{1}{p}~$. Therefore, $~\Hat{\I f}\in L^{2,p}\subset L^{2,2}=L^2~$ since $p\le 2$ by assumption. Hence, $~\I f\in \ell^2(\Z^1)~$ and so $\I$ maps $\ell^p(\Z^1)$ to $~\ell^2(\Z^1)~$. The case when $~p=2~$ and $~\frac{1}{2}-\frac{1}{r}=\frac{1}{q}~$ follows by considering the adjoint operator of $\I$, and the Lemma then follows by interpolation between the two resulting bounds for $~\I$.
\end{proof}
\vspace{6pt}
Now, { {Theorem 4}} is just an immediate corollary of { {Lemma 3}} and { {Theorem 2}} and note that  $\ell^p \subset \ell^q$ if $p\le q$.

\section{Multipliers Related to Imaginary Quadratic Fields}\label{sec4}
Now, consider the imaginary quadratic field $K=\Q(\sqrt{D})$ of discriminant $D<0$.
~Let $\mathcal{O}\subset K$ denote the ring of integers.~ Let $I$ be the group of fractional ideals $\neq 0$,
\[
I=\left\{\frac{\mathfrak{a}_1}{\mathfrak{a}_2} : ~\mathfrak{a}_1, \mathfrak{a}_2\subset \mathcal{O}, \mathfrak{a}_1\mathfrak{a}_2\neq 0\right\}
\]
and  $P\subset I$ the subgroup of principal ideals
\[
P=\{ (a)=a\mathcal{O}:  ~a\in K^*\}.
\]
Then, $\mathcal{H}=I/P$ is the class group, and the class number of $K$ is $~h=[I:P]<\infty$. For more detailed background of imaginary 
quadratic fields, see \cite{Iwaniec}.
 Define the fractional integral associated with the imaginary quadratic field by 

\begin{equation}\label{op3}
   \I_{s,K}f(m)=\sum_{0\neq \mathfrak{a} \subset \mathcal{O}}\frac{f(m-N(\mathfrak{a}))}{N(\mathfrak{a})^s},\qquad m\in \Z, 
\end{equation}
where $\mathfrak{a}$ ranges over non-zero integral ideals and $~N: I\rightarrow \Q^*$ is the norm.  In the case of integral ideals, $N(\mathfrak{a})=\#(\mathcal{O}/\mathfrak{a})$.  

The corresponding multiplier is 
\[
m_{s,K}(x)=\sum_{0\neq \mathfrak{a} \subset \mathcal{O}}\frac{ e^{-2\pi i x N(\mathfrak{a})}}{N(\mathfrak{a})^s} ,
\]
that is, 
\[
\widehat{\I_{s,K} f}(x) =m_{s,K}(x)\Hat{f}(x),\qquad \Hat{f}(x)=\sum_{n\in \Z}f(n)e^{-2\pi i n x}.
\]
Our main result is stated as below.
\begin{theorem}
    For  $\frac{1}{2}<s<1$, then ~$m_{s,K}$ belongs to weak-type ~$L^{1/(1-s)}[0,1]$.
\end{theorem}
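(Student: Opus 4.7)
\medskip

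The plan is to reduce $m_{s,K}$ to a finite sum of multipliers of the form studied by Pierce and quoted in the introduction, and then invoke her weak-type $L^{1/(1-s)}$ bound class by class. The reduction proceeds through the ideal class group.

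First, since the class group $\mathcal{H}$ of $K$ is finite of order $h$, I split the sum defining $m_{s,K}$ into a sum over ideal classes:
\[
m_{s,K}(x) \;=\; \sum_{\mathfrak{c}\in\mathcal{H}} \; \sum_{\substack{0\neq \mathfrak{a}\subset\mathcal{O}\\ [\mathfrak{a}]=\mathfrak{c}}} \frac{e^{-2\pi i x N(\mathfrak{a})}}{N(\mathfrak{a})^{s}}.
\]
For each class $\mathfrak{c}$, pick an integral ideal $\mathfrak{b}=\mathfrak{b}_{\mathfrak{c}}$ in the inverse class $\mathfrak{c}^{-1}$. Then $\mathfrak{a}\mapsto \mathfrak{a}\mathfrak{b}$ sets up a bijection between the integral ideals in the class $\mathfrak{c}$ and the non-zero principal ideals $(\alpha)\subseteq \mathfrak{b}$; equivalently, the ideals in class $\mathfrak{c}$ are parametrised by $\alpha\in \mathfrak{b}\setminus\{0\}$ modulo the unit group $\mathcal{O}^{\times}$, which has order $w\in\{2,4,6\}$. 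Since $N(\mathfrak{a})=N(\alpha)/N(\mathfrak{b})$, choosing a $\Z$-basis $\omega_{1},\omega_{2}$ of the lattice $\mathfrak{b}\subset\C$ and writing $\alpha=m\omega_{1}+n\omega_{2}$ produces a positive definite integral binary quadratic form
\[
Q_{\mathfrak{c}}(m,n)\;=\;\frac{N(m\omega_{1}+n\omega_{2})}{N(\mathfrak{b})}\;\in\;\Z_{>0}\quad\text{for } (m,n)\neq(0,0).
\]
Substituting this back yields
\[
\sum_{[\mathfrak{a}]=\mathfrak{c}}\frac{e^{-2\pi ixN(\mathfrak{a})}}{N(\mathfrak{a})^{s}}\;=\;\frac{1}{w}\sum_{(m,n)\in\Z^{2}\setminus\{0\}}\frac{e^{-2\pi i x\, Q_{\mathfrak{c}}(m,n)}}{Q_{\mathfrak{c}}(m,n)^{s}}\;=\;\frac{1}{w}\, m_{s,Q_{\mathfrak{c}}}(x).
\]

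Thus $m_{s,K}(x)=\frac{1}{w}\sum_{\mathfrak{c}\in\mathcal{H}}m_{s,Q_{\mathfrak{c}}}(x)$ is a finite sum. Pierce's theorem recalled in the introduction states precisely that, for any positive definite integral binary quadratic form of the required shape, the associated multiplier is of weak-type $L^{1/(1-s)}[0,1]$ when $1/2<s<1$. Since weak-$L^{r}$ is a quasi-Banach space and $h$ is finite, the claimed weak-type bound for $m_{s,K}$ follows by summing the $h$ individual bounds.

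The main technical issue is matching Pierce's precise normalisation (positive definite, symmetric matrix with integer entries and \emph{even} diagonal) with the forms $Q_{\mathfrak{c}}$ arising from arbitrary integral ideals. When $D\equiv 1\pmod 4$ the norm form on $\mathcal{O}=\Z[(1+\sqrt{D})/2]$ already has the right parity, but in general the diagonal entries of $Q_{\mathfrak{c}}$ may be odd. The clean fix is to pass to the form $2Q_{\mathfrak{c}}$, which always has integer entries and even diagonal; the simple scaling identity
\[
m_{s,2Q_{\mathfrak{c}}}(x)\;=\;2^{-s}\, m_{s,Q_{\mathfrak{c}}}(2x)
\]
transfers Pierce's weak-type bound for $2Q_{\mathfrak{c}}$ back to $Q_{\mathfrak{c}}$ (using translation/dilation invariance of weak-$L^{r}[0,1]$, noting that the multiplier is $1$-periodic). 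This is the one place where a small argument is needed; once it is dispatched, the rest of the proof is just the class-group decomposition above.
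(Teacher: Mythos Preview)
Your proof is correct and takes essentially the same approach as the paper: decompose $m_{s,K}$ over the finite ideal class group, identify each class contribution with $\tfrac{1}{w}\,m_{s,Q}$ for a positive definite integral binary quadratic form, and invoke Pierce's weak-type $L^{1/(1-s)}$ bound. Your worry about the even-diagonal hypothesis is unnecessary, since any integer-valued binary form $Q(m,n)=am^{2}+bmn+cn^{2}$ automatically has matrix $\bigl(\begin{smallmatrix}2a&b\\ b&2c\end{smallmatrix}\bigr)$ with even diagonal---this is exactly the normalisation the paper uses, so the scaling workaround via $2Q_{\mathfrak{c}}$ can be dropped.
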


\begin{proof}
Let $w$ be the number of units of $K.$ For every class $\mathcal{A} \in \mathcal{H}$, we introduce the corresponding multiplier: 

\begin{equation}\label{multiplier A}
  m_{s,\mathcal{A}}(x)=\sum_{0\neq\mathfrak{a} \in \mathcal{A}} \frac{e^{-2\pi  i x N(\mathfrak{a}) }}{N(\mathfrak{a})^s}  
\end{equation}
Every class $\mathcal{A} $ contains an integral primitive ideal, i.e., every class is not divisible by a rational integer $>1$. 
Every primitive ideal can be written as
\[
\mathfrak{a}=\left[a,\frac{b+\sqrt{D}}{2}\right]~~\textit{with} ~~a>0,~b^2-4ac=D, ~(a,b,c)=1.
\]
The above notation means $\mathfrak{a}$ is a free $\Z$-module, 
\[
\mathfrak{a}=a\Z+\frac{b+\sqrt{D}}{2}\Z.
\]
Note that $\frac{b+\sqrt{D}}{2}\in \mathcal{O}$ and $a=N(\mathfrak{a})$; with the generators of $\mathfrak{a}$, we associate the quadratic form
\[
\varphi_A(x)=ax_1^2+bx_1x_2+cx_2^2=\frac{1}{2}A[x]
\]
where 
\[
A=\left(
\begin{array}{cc}
   2a  & b  \\
    b & 2c
\end{array}
\right).
\]

This establishes a one-to-one correspondence between the ideal classes $\mathcal{A}\in \mathcal{H}$ and the equivalence classes of primitive binary quadratic forms $\varphi_A$ of discriminant $D.$ We choose $\sqrt{D}=i\sqrt{|D|}$ so that
\[
z_{\mathfrak{a}}=\frac{b+\sqrt{D}}{2a}\in \mathbb{H}.
\]
Then, the inverse ideal $\mathfrak{a}^{-1}$ is a free $\Z$-module generated by one and $\bar z_{\mathfrak{a}}$:
\[
\mathfrak{a}^{-1}=[1,\bar z_{\mathfrak{a}}]=\Z+\frac{b-\sqrt{D}}{2a}\Z.
\]
Now, given a class $\mathcal{A}$ which contains $\mathfrak{a},$ we can write
\[
m_{s,\mathcal{A}}(x)=\sum_{0\neq\mathfrak{b} \sim\mathfrak{a} \in\mathcal{A}} \frac{e^{-2\pi  i x N(\mathfrak{a}) }}{N(\mathfrak{a})^s}
\]
Here, the equivalence $\mathfrak{b} \sim \mathfrak{a}$ means $\mathfrak{b}=(\alpha)\mathfrak{a}$ with $\alpha\in \mathfrak{a}^{-1},~\alpha\neq 0,$~i.e., $\alpha=m+n\bar z_{\mathfrak{a}}$ ~with $m,n\in\Z,$ ~not both zero.
As $m,n$ range over the integers, every ideal $\mathfrak{b} \sim \mathfrak{a}$ is covered exactly $w$ times. Moreover, we have $N(\mathfrak{b})=|\alpha|^2 a=am^2+bmn+cn^2$; hence,
\[
m_{s,\mathcal{A}}(x)=\sum_{0\neq\mathfrak{a} \in \mathcal{A}} \frac{e^{-2\pi  i x N(\mathfrak{a}) }}{N(\mathfrak{a})^s}=\frac{1}{w}\sum_{(0,0)\neq (m,n)\in \Z^2} \frac{e^{-2\pi  i x \varphi_A(m,n) }}{\varphi_A(m,n)^s}
\]
On the other hand, we have
\[
m_{s,K}(x)=\sum_{\mathcal{A}\in \mathcal{H}} m_{s,\mathcal{A}}(x).
\]
Lillian Pierce \cite{Pierce Lillian 3} showed that $m_{s,\mathcal{A}}\in  L^{1/(1-s)}[0,1]$ and therefore  $m_{s,K}\in L^{1/(1-s)}[0,1]$ .
\end{proof}

\vspace{-6pt}
\section{Discrete Analogue of Stein--Weiss Inequality on Product Space}\label{sec5}
In their study of fractional integrals, Stein and Weiss \cite{Stein-Weiss} considered the operator $\I_{\alpha}$, acting on functions on $\R^\NN$, given by

\begin{equation}\label{frac integ op}
    \I_{\alpha}f(x)=\int_{\R^\NN}f(y)\left(\frac{1}{|x-y|}\right)^{\NN-\alpha}dy.
\end{equation}
If we let $~\omega(x)=|x|^{-\gamma},~\sigma(x)=|x|^{\delta}$, they proved under some conditions of $\gamma$ and$~\delta$, that the following weighted norm inequality holds:

\begin{equation}\label{weighted norm}
   ||\omega\I_\alpha f ||_{L^q(\R^\NN)}\le C ||f\sigma||_{L^p(\R^\NN)}, 
\end{equation}
which is now known as the Stein--Weiss inequality. When $\gamma=\delta=0~, ~1/q=1/p-\alpha/\NN$,  ~ (\ref{weighted norm}) is the famous {\bf Hardy--Littlewood--Sobolev} inequality, for more details, see \cite{Stein, Hardy-Littlewood, Sobolev}.

In 2021, Wang \cite{Wang}~ extended the Stein--Weiss inequality to the so-called product space case. Now, consider $\R^\N$  as a product space by writing $\R^\NN=\R^{\NN_1}\times\R^{\NN_2}\times\cdots\times \R^{\NN_k},~k\ge 2. $
Let 
\[
0<\alpha_i<\NN_i,\qquad i=1,2,\dots,k  \quad \textit{and} \quad \alpha=\alpha_1+\alpha_2+\cdots+\alpha_k.
\]
Wang actually studied the weighted norm inequality of the so-called \emph{strong fractional integral operator} $\mathcal{J}_{\alpha}~$, defined by
\begin{equation}\label{str frac integ op}
   \mathcal{J}_{\alpha}f(x)=\int_{\R^\NN}f(y)\prod_{i=1}^k \left(\frac{1}{|x_i-y_i|}\right)^{\NN_i-\alpha_i}dy. 
\end{equation}

\begin{theorem}[\cite{Wang}]
Let $0<\alpha<\NN, ~\gamma,\delta\in \R, ~1<p\le q<\infty $. Then,
    \begin{equation}\label{Wang ineq}
           ||\omega\mathcal{J}_{\alpha}f||_{L^q(\R^\NN)}\le C||f\sigma||_{L^p(\R^\NN)}, 
    \end{equation}
is equivalent to 

\begin{equation}\label{constrain1}
       \gamma<\frac{\NN}{q},\quad \delta<\NN\frac{p-1}{p},\quad \gamma+\delta\ge 0,\qquad
  \frac{1}{q}=\frac{1}{p}+\frac{\gamma+\delta-\alpha}{\NN}.
\end{equation}
For $\gamma\ge 0,~\delta\le 0,$
\begin{equation}\label{constrain3}
     \alpha_i-\frac{\NN_i}{p}<\delta,\qquad i=1,2,\dots,k.
\end{equation}
    For $\gamma\le 0,~\delta\ge 0,$
    \begin{equation}\label{constrain4}
         \alpha_i-\NN_i\frac{q-1}{q}<\gamma,\qquad i=1,2,\dots,k.
    \end{equation}
For $\gamma> 0,~\delta> 0,$
  \begin{equation}\label{constrain5}
       \sum_{ \substack{i\in\{1,2,\dots,k \} \\~\alpha_i\ge \NN_i/p}}\left(\alpha_i-\frac{\NN_i}{p}\right)<\delta,\qquad 
  \sum_{ \substack{i\in\{1,2,\dots,k \} \\  \alpha_i\ge \NN_i(q-1)/q}}\left(\alpha_i-\NN_i\left(\frac{q-1}{q}\right)\right)<\gamma,\qquad 
  \end{equation}
\end{theorem}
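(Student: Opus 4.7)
The plan is to prove the equivalence in two directions. For necessity, I would test (\ref{Wang ineq}) on tensor-product indicator functions $f = \chi_{B_{r_1}} \otimes \cdots \otimes \chi_{B_{r_k}}$ and track how both sides transform under anisotropic dilations $x \mapsto (\lambda_1 x_1,\dots,\lambda_k x_k)$. The isotropic case $\lambda_i \equiv \lambda$ forces the scaling identity $\tfrac{1}{q} = \tfrac{1}{p} + \tfrac{\gamma+\delta-\alpha}{\NN}$, while individual one-parameter subfamilies produce the coordinatewise thresholds (\ref{constrain3})--(\ref{constrain5}) by demanding that the resulting family of inequalities remain bounded as $\lambda_i \to 0^+$ or $\lambda_i \to \infty$. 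The bounds $\gamma < \NN/q$ and $\delta < \NN(p-1)/p$ arise from requiring local integrability of $\omega^q$ and $\sigma^{-p'}$ (testing on functions concentrated at the origin), and $\gamma+\delta \ge 0$ follows by translating bump test functions to distant points and observing how the two weight factors combine there.

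For sufficiency, I would perform a joint Whitney decomposition of $\R^{\NN} = \R^{\NN_1}\times\cdots\times\R^{\NN_k}$ into products of dyadic annuli: set $A^{(i)}_{k_i} := \{x_i \in \R^{\NN_i} : |x_i| \sim 2^{k_i}\}$ and write $f = \sum_{\vec k\in\Z^k} f_{\vec k}$ with $f_{\vec k} := f \cdot \prod_i \chi_{A^{(i)}_{k_i}}$. On each output annulus-product $\prod_i A^{(i)}_{j_i}$, the kernel $\prod_i |x_i - y_i|^{-\NN_i + \alpha_i}$ is pointwise comparable to $\prod_i 2^{\max(j_i,k_i)(\alpha_i - \NN_i)}$ in the off-diagonal regime $j_i \ne k_i$; the diagonal contributions $j_i \approx k_i$ reduce, after factoring out the weights which are essentially constant on each product annulus, to the classical (unweighted) Hardy--Littlewood--Sobolev inequality in each factor, iterated via Minkowski's integral inequality. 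The resulting multiparameter sum over $\vec j,\vec k$ can then be organized as a discrete Schur-type estimate, with convergence of the geometric series controlled exactly by the condition patterns (\ref{constrain3})--(\ref{constrain5}).

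The main technical obstacle is that the weights $|x|^{-\gamma}$ and $|x|^\delta$ depend on the \emph{full} Euclidean norm on $\R^{\NN}$ and do \emph{not} factor across $\R^{\NN_i}$, so (\ref{Wang ineq}) is not a pure tensor of one-factor Stein--Weiss inequalities. This is precisely why the case analysis in (\ref{constrain3})--(\ref{constrain5}) splits according to the signs of $\gamma,\delta$ and the size of each $\alpha_i$ relative to $\NN_i/p$ and $\NN_i(q-1)/q$: these are the thresholds at which certain off-diagonal geometric series in the multiparameter dyadic sum cross the line between convergence and divergence. Demonstrating the \emph{tightness} of each condition (the direction from (\ref{Wang ineq}) back to the full list) requires constructing near-extremal anisotropic test functions that saturate each boundary in turn. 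I expect the careful bookkeeping of these multiparameter geometric series, together with the matching near-extremizer constructions that show each condition is sharp, to be the main burden of the proof.
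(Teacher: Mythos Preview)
The paper does not prove this statement: Theorem~6 is quoted verbatim from Wang~\cite{Wang} and used as a black box in the proof of Theorem~7. There is no proof of Theorem~6 in the paper to compare your proposal against. Your sketch (anisotropic dilations for necessity, multiparameter dyadic/Whitney decomposition plus Schur-type summation for sufficiency) is a plausible outline of how one might prove Wang's result, and your identification of the key difficulty---that the weights $|x|^{-\gamma}$, $|x|^{\delta}$ do not factor across the $\R^{\NN_i}$---is on target, but assessing its correctness would require comparison with Wang's original argument rather than with anything in this paper.
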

It is natural to ask whether the discrete analogue of {{Theorem 6}}  holds.  Considering the discrete operator $\T_{\alpha,\gamma,\delta}$, defined as follows,
\begin{equation}\label{op1}
     \T_{\alpha,\gamma,\delta}f(n)=\sum_{\substack{m_1\in \Z^{\NN_1} \\ m_1\neq n_1}} \sum_{\substack{m_2\in \Z^{\NN_2} \\  m_2\neq n_2}}\cdots\sum_{\substack{m_k\in \Z^{\NN_k} \\  m_k\neq n_k}}\frac{f(m)}{|n|^{\gamma}|m|^\delta} \prod_{i=1}^k \left(\frac{1}{|n_i-m_i|}\right)^{\NN_i-\alpha_i},\qquad |m||n|\neq 0.
\end{equation}
where 
\[
m=(m_1,m_2,\dots,m_k),\quad n=(n_1,n_2,\dots.n_k)\in \Z^{\NN_1} \times\Z^{\NN_2}\times\cdots\times\Z^{\NN_k},
\]
the following Theorem is expected. 
\begin{theorem}
Under the conditions of  (\ref{constrain1})--(\ref{constrain5}), we have 
    \begin{equation}\label{discrete Stein-Weiss prod}
        ||\T_{\alpha,\gamma,\delta}f||_{\ell^q(\Z^\NN)}\le  C||f||_{\ell^p(\Z^\NN)}.
\end{equation}
\end{theorem}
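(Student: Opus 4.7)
The plan is to transfer Theorem 6 from the continuum to the lattice via an \emph{implication} argument, in the spirit of the classical derivation of the $\ell^p$-boundedness of the discrete Hilbert transform from the $L^p$-boundedness of its continuous counterpart. Concretely, I will associate to each $f:\Z^{\NN}\to \C$ a step function $F$ on $\R^{\NN}$ whose $L^p$-norm is comparable to $\|f\|_{\ell^p(\Z^\NN)}$, and show that $|\T_{\alpha,\gamma,\delta}f(n)|$ is pointwise dominated, for $x$ in a fixed cube around $n$, by $\omega(x)\mathcal{J}_\alpha(|F|/\sigma)(x)$, where $\omega(x)=|x|^{-\gamma}$ and $\sigma(y)=|y|^\delta$. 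Theorem 6 applied to the input $|F|/\sigma$ then closes the loop.

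\textbf{Construction and pointwise comparison.} For $m\in\Z^{\NN}$ set $Q_m:=m+[-\tfrac14,\tfrac14]^{\NN}$; these cubes are pairwise disjoint, so defining $F(x):=f(m)$ on $Q_m$ and $F(x):=0$ elsewhere gives $\|F\|_{L^p(\R^\NN)}^p=2^{-\NN}\|f\|_{\ell^p}^p$. The key geometric observation is that whenever $n_i\neq m_i$ in $\Z^{\NN_i}$ for every $i$ (the constraint in the definition of $\T_{\alpha,\gamma,\delta}$), one has $|n_i-m_i|\geq 1$, whence a direct check gives
\[
c(\NN_i)\,|n_i-m_i| \;\leq\; |x_i-y_i| \;\leq\; C(\NN_i)\,|n_i-m_i|
\]
for every $x\in Q_n$ and $y\in Q_m$ (the shrinking of the cubes from side $1$ to side $1/2$ is exactly what enforces the lower bound even when $|n_i-m_i|=1$). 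Provided $|n|,|m|$ are bounded away from $0$, the weight factors also satisfy $|n|^{-\gamma}\sim |x|^{-\gamma}$ and $|m|^{-\delta}\sim |y|^{-\delta}$. Multiplying these term-by-term, using $F(y)=f(m)$ on $Q_m$, produces
\[
|\T_{\alpha,\gamma,\delta} f(n)| \;\lesssim\; \omega(x)\,\mathcal{J}_\alpha\bigl(|F|/\sigma\bigr)(x), \qquad x\in Q_n.
\]
Raising to the $q$th power, integrating over $Q_n$, summing over $n$, and finally invoking Theorem 6 on $|F|/\sigma$ (so that $(|F|/\sigma)\,\sigma=|F|$) yields
\[
\|\T_{\alpha,\gamma,\delta}f\|_{\ell^q}^q \;\lesssim\; \|\omega\,\mathcal{J}_\alpha(|F|/\sigma)\|_{L^q(\R^\NN)}^q \;\lesssim\; \|F\|_{L^p}^q \;\sim\; \|f\|_{\ell^p}^q,
\]
which is the claim.

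\textbf{Main obstacle.} The principal obstacle is the degeneration of the weight comparisons $|n|\sim |x|$ and $|m|\sim |y|$ near the origin. Those $n,m$ with $|n|\le N_0$ or $|m|\le N_0$ (for some fixed $N_0$ depending only on the $\NN_i$) constitute finitely many lattice points, and the restriction of $\T_{\alpha,\gamma,\delta}$ either on the domain side or on the range side to these exceptional indices is a finite-rank correction; it is bounded from $\ell^p$ to $\ell^q$ by an elementary estimate using H\"older's inequality and the absolute summability of the kernel guaranteed by the scaling relation in (\ref{constrain1}). The remaining technical point---verifying that the admissibility conditions (\ref{constrain1})--(\ref{constrain5}) transfer verbatim to the discrete setting---is essentially automatic, since these constraints depend only on the numerology $(p,q,\alpha,\gamma,\delta,\NN,\NN_i)$, which is preserved by the lift $f\mapsto F$.
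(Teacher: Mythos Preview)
Your approach is essentially the paper's: lift $f$ to a step function $F$ on $\R^\NN$ constant on translates of a fixed cube, establish a pointwise domination $|\T_{\alpha,\gamma,\delta}f(n)|\lesssim \omega(x)\,\mathcal{J}_\alpha(|F|/\sigma)(x)$ for $x$ in the cube around $n$, and then invoke Theorem~6. The paper uses unit cubes $Q=(-\tfrac12,\tfrac12]^\NN$ rather than your half-size cubes, but this is cosmetic.

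The one place you diverge is your ``main obstacle,'' which is in fact a phantom: since the operator is defined only for $|n|,|m|\neq 0$ and these are lattice points, one has $|n|,|m|\geq 1$ automatically, and for $x\in Q+n$ some coordinate of $x$ has absolute value at least $\tfrac12$; hence the comparisons $|n|^{-\gamma}\leq C|x|^{-\gamma}$ and $|m|^{-\delta}\leq C|y|^{-\delta}$ hold uniformly over all nonzero $n,m$ with a constant depending only on $\NN,\gamma,\delta$ (this is exactly the paper's inequality block~(\ref{ineq1})). Your finite-rank correction is therefore unnecessary, and its stated justification---``absolute summability of the kernel guaranteed by the scaling relation in~(\ref{constrain1})''---is too quick anyway, since $\ell^{p'}$-summability of $K(n,\cdot)$ needs $\gamma<\NN/q$ from~(\ref{constrain1}), not merely the scaling identity.
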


\begin{proof} We can assume that $f\ge 0$. Let $~Q=Q_1\times Q_2\times\cdots\times Q_k\subset \R^{\NN_1}\times\R^{\NN_2}\times \cdots\times\R^{\NN_k} $, where 

\[
Q_i=\left\{x_i=(x_i^1,x_i^2,\dots,x_i^{\NN_i}):~-\frac{1}{2}<x_i^j\le \frac{1}{2},~~j=1,2,\dots,\NN_i\right \},\quad i=1,2,\dots, k.
\]
Define $F$, associated with $f$, as

\begin{equation}\label{F asso f}
   F(x)=f(n),\qquad x\in Q+n, 
\end{equation}
 $Q$ is the fundamental cube in $\R^\NN$. 
Since 
\[
\R^\NN=\bigcup_{n\in\Z^\NN}(Q+n),
\]
$F$ is well defined. Note that for the appropriate constant $C$,

\begin{equation}\label{ineq1}
    \begin{array}{cc}\ds
 |n_i-m_i|^{-\NN_i+\alpha_i}\le C|n_i+u_i-(m_i+v_i)|^{-\NN_i+\alpha_i}, \quad i=1,2,\dots,k,
 \\\\ \ds
 |n|^{-\gamma}\le C|n+u|^{-\gamma},\quad |m|^{-\delta}\le  C|m+v|^{-\delta},
 \end{array}
\end{equation}
where $u_i,v_i\in Q_i,~i=1,2,\dots,k~$ and $u=(u_1,u_2,\dots,u_k),~v=(v_1,v_2,\dots,v_k).~$ Therefore, we have

\begin{equation}\label{ineq2}
    \begin{array}{lc}\ds
\sum_{ \substack{m_1\in \Z^{\NN_1}\\  m_1\neq n_1}} \sum_{\substack{m_2\in \Z^{\NN_2}\\  m_2\neq n_2}}\cdots\sum_{ \substack{m_k\in \Z^{\NN_k} \\  m_k\neq n_k}}\frac{f(m)}{|n|^{\gamma}|m|^\delta} \prod_{i=1}^k \left(\frac{1}{|n_i-m_i|}\right)^{\NN_i-\alpha_i} 
\\\\ \ds
 \le C \sum\limits_{\substack{m_1\in \Z^{\NN_1} \\  m_1\neq n_1}} \sum\limits_{\substack{m_2\in \Z^{\NN_2} \\  m_2\neq n_2}}\cdots\sum\limits_{\substack{m_k\in \Z^{\NN_k} \\  m_k\neq n_k}}
\left\{\prod_{i=1}^k \int_{v_i\in Q_i}\right\} 
 \frac{f(m)}{|n+u|^{\gamma}|m+v|^\delta}  \prod_{i=1}^k \left(\frac{1}{|n_i+u_i-m_i-v_i|}\right)^{\NN_i-\alpha_i}dv_i
 \\\\ \ds 
 \le C \sum\limits_{ \substack{m_1\in \Z^{\NN_1} \\ m_1\neq n_1}} \sum\limits_{ \substack{m_2\in \Z^{\NN_2}\\  m_2\neq n_2}}\cdots\sum\limits_{\substack{m_k\in \Z^{\NN_k} \\  m_k\neq n_k}}
 \left\{\prod_{i=1}^k \int_{v_i\in Q_i}\right\} 
 \frac{F(m+v)}{|x|^{\gamma}|m+v|^\delta}  \prod_{i=1}^k \left(\frac{1}{|x_i-m_i-v_i|}\right)^{\NN_i-\alpha_i} dv_i
 \\ \ds 
 ~~~~~~~~~~~~~~~~~~~~~~~~~~~~~~~~~~(x=n+u)
 \\ \ds 
 \le C \sum\limits_{\substack{m_1\in \Z^{\NN_1} \\ m_1\neq n_1}} \sum\limits_{\substack{m_2\in \Z^{\NN_2} \\  m_2\neq n_2}}\cdots\sum\limits_{ \substack{m_k\in \Z^{\NN_k} \\
 m_k\neq n_k}}
  \left\{\prod_{i=1}^k \int_{y_i\in Q_i+m_i}\right\} 
  \frac{F(y)}{|x|^{\gamma}|y|^\delta} \prod_{i=1}^k \left(\frac{1}{|x_i-y_i|}\right)^{\NN_i-\alpha_i}dy_i
  \\ \ds~~~~~~~~~~~~~~~~~~~~~~~~~~~~~
  \quad (y=m+v)
 \\ \ds  
 \le  C \int_{\R^\NN} \frac{F(y)}{|x|^{\gamma} |y|^\delta} \prod_{i=1}^k \left(\frac{1}{|x_i-y_i|}\right)^{\NN_i-\alpha_i}  dy =C \T^*_{\alpha,\gamma,\delta}F(x), \quad x\in Q+n.
 \end{array}
\end{equation}
 
Now, by (\ref{ineq2}) and applying {{Theorem 6}}, we have
\begin{equation}\label{ineq3}
    \begin{array}{cc}\ds
   ||\T_{\alpha,\gamma,\lambda}f||_{\ell^q(\Z^\NN)}=\left(\sum_{n\neq 0} \T_{\alpha,\gamma,\lambda}f(n)^q \right)^{\frac{1}{q}} \le C \left(\sum_{n\neq 0}\int_{x\in Q+n}\T^*_{\alpha,\gamma,\lambda}F(x)^qdx \right)^{\frac{1}{q}} \ds
   \\\\
   \le C ||\T^*_{\lambda,\alpha,\beta}F||_{L^q(\R^\NN)}\le C||F||_{L^p(\R^\NN)}=C||f||_{\ell^p(\Z^\NN)},
   \end{array}
\end{equation}
Under the same conditions as (\ref{constrain1})--(\ref{constrain5}), note that $\ell^{p_1}(\Z^\NN)\subset 
 \ell^{p_2}(\Z^\NN)$ if $~p_1\le p_2$, We can actually improve (\ref{constrain1}) to 
 \[
   \frac{1}{q}\le \frac{1}{p}+\frac{\gamma+\delta-\alpha}{\NN}.
   \]
 The proof is completed.
\end{proof}

\addcontentsline{toc}{section}{Acknowledgements}		
	

\begin{thebibliography}{999}
     \bibitem[R28]{R28}
Riesz, M.    Sur les fonctions conjuguées. {\em Math. Z.} {\bf 1928}, {\em 27}, 218--244. 

  
 \bibitem[SW00]{Stein-Wainger1}
Stein, E.;  Wainger, S.   Discrete analogues in harmonic analysis. {II}. Fractional
              integration. {\em J. Anal. Math.} {\bf 2000}, {\em 80}, 335--355.
              
       \bibitem[W21]{W21}
Wainger, S.    An introduction to the circle method of {H}ardy, {L}ittlewood,
              and {R}amanujan. {\em J. Geom. Anal.} {\bf 2021}, {\em 31}, 9113--9130. 

\bibitem[P19]{Pierce Lillian 3}
Pierce, L.  Discrete Analogues in Harmonic Analysis. Ph.D. Thesis, Princeton University,  NJ, USA, 2019.

  
  \bibitem[SW02]{Stein-Wainger2}
Stein, E.;  Wainger, S. Two discrete fractional integral operators revisited. {\em J. Anal. Math.} {\bf 2002}, {\em 87}, 451--479.


\bibitem[O01]{Oberlin}
Oberlin, D. Two discrete fractional integrals. {\em Math. Res. Lett.} {\bf 2001}, {\em 8}, 1--6.
  
   
    \bibitem[P21]{Pierce Lillian 1}
Pierece, L.     Discrete fractional {R}adon transforms and quadratic forms. {\em Duke Math. J.} {\bf 2012}, {\em 161}, 69--109. 
 
 
    \bibitem[P11]{Pierce Lillian 2}
Pierece, L.     On discrete fractional integral operators and mean values of
              {W}eyl sums. {\em Bull. Lond. Math. Soc.} {\bf 2011}, {\em 43}, 597--612. 
   
   
                
     \bibitem[BW23]{B23}
Bourgain, J.;  Stein,  E.;  Wright,  J.   On a multi-parameter variant of the Bellow–Furstenberg problem. {\em Forum Math. Pi} {\bf 2023}, {\em 11}, 1--64. 

                     \bibitem[BMSW19]{B19}
Bourgain, J.;  Mirek, M.;  Stein,  E.;     Wr\'{o}bel, B.  Dimension-free estimates for discrete {H}ardy-{L}ittlewood
              averaging operators over the cubes in {$\Bbb Z^d$}. {\em Amer. J. Math. } {\bf 2019}, {\em 141}, 857--905. 
 
      \bibitem[MST19]{M19}
Mirek, M.;  Stein,  E.;  Trojan,  B.   {$\ell^p(\Bbb{Z}^d)$}-estimates for discrete operators of
              {R}adon type: maximal functions and vector-valued estimates. {\em J. Funct. Anal. } {\bf 2019}, {\em 8}, 2471--2521. 
 
      \bibitem[M24]{M24}
Mehmood, S.;   Mohammed, P.;   Kashuri, A.;  Chorfi, N.;   Mahmood, S.;  Yousif, M.  Some New Fractional Inequalities Defined Using cr-Log-h-Convex Functions and Applications.  {\em Symmetry } {\bf 2024}, {\em 16}, 407. 
   
       \bibitem[I04]{Iwaniec}
Iwaniec, H.;   Kowalski, E.   \textit{Analytic Number Theory}; American Mathematical Society: Providence, RI, USA, 2004. 

   
   
  \bibitem[SW58]{Stein-Weiss}
Stein, E.;  Weiss, G.    Fractional Integrals on $n$-Dimensional Euclidean Space. {\em J. Math. Mech.} {\bf 1958}, {\em 7}, 503--514.
    
 
\bibitem[S93]{Stein}
Stein, E.   \textit{Harmonic Analysis: Real-Variable Methods, Orthogonality and Oscillatory Integrals}; Princeton University Press: 
 NJ, USA, 1993.  
 
  \bibitem[HL28]{Hardy-Littlewood}
Hardy, G.;  Littlewood, J.    Some Properties of Fractional Integrals. {\em J. Math. Mech.} {\bf 1928}, {\em 27}, 565--606.
 
 
    
 
    \bibitem[S38]{Sobolev}
Sobolev, S. On a Theorem of Functional Analysis. {\em Mat. Sb.} {\bf 1938}, {\em 46}, 471--497. 

     \bibitem[W21]{Wang}
Wang, Z.      Stein-Weiss inequality on product spaces. {\em Rev. Mat. Iberoam.} {\bf 2021}, {\em 37}, 1641--1667. 



 
\end{thebibliography}
\end{document}